\DeclareMathOperator*{\esssup}{ess\,sup}
\renewcommand\eqref[1]{(\ref{#1})}
\newcommand*{\mint}[1]{%
  \mint@l{#1}{}%
}
\newcommand*{\mint@l}[2]{%
  \@ifnextchar\limits{%
    \mint@l{#1}%
  }{%
    \@ifnextchar\nolimits{%
      \mint@l{#1}%
    }{%
      \@ifnextchar\displaylimits{%
        \mint@l{#1}%
      }{%
        \mint@s{#2}{#1}%
      }%
    }%
  }%
}
\newcommand*{\mint@s}[2]{%
  \@ifnextchar_{%
    \mint@sub{#1}{#2}%
  }{%
    \@ifnextchar^{%
      \mint@sup{#1}{#2}%
    }{%
      \mint@{#1}{#2}{}{}%
    }%
  }%
}
\def\mint@sub#1#2_#3{%
  \@ifnextchar^{%
    \mint@sub@sup{#1}{#2}{#3}%
  }{%
    \mint@{#1}{#2}{#3}{}%
  }%
}
\def\mint@sup#1#2^#3{%
  \@ifnextchar_{%
    \mint@sup@sub{#1}{#2}{#3}%
  }{%
    \mint@{#1}{#2}{}{#3}%
  }%
}
\def\mint@sub@sup#1#2#3^#4{%
  \mint@{#1}{#2}{#3}{#4}%
}
\def\mint@sup@sub#1#2#3_#4{%
  \mint@{#1}{#2}{#4}{#3}%
}
\newcommand*{\mint@}[4]{%
  \mathop{}%
  \mkern-\thinmuskip
  \mathchoice{%
    \mint@@{#1}{#2}{#3}{#4}%
        \displaystyle\textstyle\scriptstyle
  }{%
    \mint@@{#1}{#2}{#3}{#4}%
        \textstyle\scriptstyle\scriptstyle
  }{%
    \mint@@{#1}{#2}{#3}{#4}%
        \scriptstyle\scriptscriptstyle\scriptscriptstyle
  }{%
    \mint@@{#1}{#2}{#3}{#4}%
        \scriptscriptstyle\scriptscriptstyle\scriptscriptstyle
  }%
  \mkern-\thinmuskip
  \int#1%
  \ifx\\#3\\\else_{#3}\fi
  \ifx\\#4\\\else^{#4}\fi
}
\newcommand*{\mint@@}[7]{%
  \begingroup
    \sbox0{$#5\int\m@th$}%
    \sbox2{$#5\int_{}\m@th$}%
    \dimen2=\wd0 %
    \let\mint@limits=#1\relax
    \ifx\mint@limits\relax
      \sbox4{$#5\int_{\kern1sp}^{\kern1sp}\m@th$}%
      \ifdim\wd4>\wd2 %
        \let\mint@limits=\nolimits
      \else
        \let\mint@limits=\limits
      \fi
    \fi
    \ifx\mint@limits\displaylimits
      \ifx#5\displaystyle
        \let\mint@limits=\limits
      \fi
    \fi
    \ifx\mint@limits\limits
      \sbox0{$#7#3\m@th$}%
      \sbox2{$#7#4\m@th$}%
      \ifdim\wd0>\dimen2 %
        \dimen2=\wd0 %
      \fi
      \ifdim\wd2>\dimen2 %
        \dimen2=\wd2 %
      \fi
    \fi
    \rlap{%
      $#5%
        \vcenter{%
          \hbox to\dimen2{%
            \hss
            $#6{#2}\m@th$%
            \hss
          }%
        }%
      $%
    }%
  \endgroup
}
\numberwithin{equation}{section}
\theoremstyle{plain}
\newtheorem{thm}{Theorem}[section]
\newtheorem{cor}[thm]{Corollary}
\theoremstyle{definition}
\newtheorem{defn}[thm]{Definition}
\newtheorem{rem}[thm]{Remark}
\title[Wave equation for singular Sturm-Liouville operator]{Wave equation for Sturm-Liouville operator with singular intermediate coefficient and potential}
\author[M. Ruzhansky]{Michael Ruzhansky}
\address{
  Michael Ruzhansky:
  \endgraf
  Department of Mathematics: Analysis, Logic and Discrete Mathematics
  \endgraf
  Ghent University, Belgium
  \endgraf
 and
  \endgraf
  School of Mathematical Sciences
  \endgraf
  Queen Mary University of London
  \endgraf
  United Kingdom
  \endgraf
  {\it E-mail address} {\rm michael.ruzhansky@ugent.be}
  }
\author[A. Yeskermessuly]{Alibek Yeskermessuly}
\address{
  Alibek Yeskermessuly:
    \endgraf
  Department of Mathematics: Analysis, Logic and Discrete Mathematics
  \endgraf
  Ghent University, Belgium
  \endgraf
 and 
   \endgraf
  Altynsarin Arkalyk Pedagogical Institute 
  \endgraf
  Arkalyk, Kazakhstan
  \endgraf
  {\it E-mail address} {\rm alibek.yeskermessuly@gmail.com}}
\begin{document}

\thanks{The authors are supported by the FWO Odysseus 1 grant G.0H94.18N: Analysis and Partial Differential Equations and by the Methusalem programme of the Ghent University Special Research Fund (BOF) (Grant number 01M01021). Michael Ruzhansky is also supported by EPSRC grants EP/R003025/2 and EP/V005529/1, and the second author by the international internship program \textquotedblleft Bolashak\textquotedblright \,of the Republic of Kazakhstan. \\
\indent
{\it Keywords:} Wave equation; Sturm-Liouville; singular coefficient; very weak solutions.}
\maketitle              
\begin{abstract}
In this paper, we consider a wave equation on a bounded domain with a Sturm-Liouville operator with a singular intermediate coefficient and a singular potential. To obtain and evaluate the solution, the method of separation of variables is used, then the expansion in the Fourier series in terms of the eigenfunctions of the Sturm-Liouville operator is used. The Sturm-Liouville eigenfunctions are determined by such coefficients using the modified Prufer transform. Existence, uniqueness and consistency theorems are also proved for a very weak solution of the wave equation with singular coefficients.
\end{abstract}
\section{Introduction}

The purpose of this work is to establish the results on the well-posedness of the wave equation for the Sturm-Liouville operator with a singular intermediate coefficient and a singular potential.

In \cite{Lan-Ham}, very weak solutions of the wave equation for the Landau Hamiltonian with an irregular electromagnetic field are obtained for an unbounded domain in the space. A number of works (\cite{ARST1}, \cite{ARST2}, \cite{ARST3}, \cite{CRT1}, \cite{CRT2}, \cite{CRT3}, \cite{Garet}, \cite{R-Y}) are also devoted to this topic. The difference between our results is that we consider the problem in a bounded domain. We have obtained similar results in the work \cite{R-Sh-Y}, so our current work is a further development of these results, allowing one to include the intermediate term.

It is well known that the wave equation is easily reduced to ordinary linear equations by the ``separation of variables" method (see, for example, \cite{Separ}).

To obtain the main results, we present some information about the Sturm-Liouville operator with singular potentials. Savchuk and Shkalikov in \cite{Sav-Shk} obtained eigenvalues and eigenfunctions of the Sturm-Liouville operator with singular potentials. This method was further developed in the works \cite{N-zSk}, \cite{Savch}, \cite{Sav-Shk2}, \cite{SV}. We are guided by this method and will develop it with the addition of an intermediate coefficient, and, accordingly, additional conditions will be imposed on the coefficients, and the regularity requirements will be relaxed.

In particular, we consider the problem of constructing eigenvalues and eigenfunctions of the Sturm-Liouville operator $\mathcal{L}$ generated on the interval (0,1) by the differential expression
\begin{equation}\label{St-L}
    \mathcal{L}y:=-\frac{d^2}{dx^2}y+p(x)\frac{d}{dx}y+q(x)y
\end{equation}
with the boundary conditions
\begin{equation}\label{Dirihle}
    y(0)=y(1)=0. 
\end{equation}
We first assume that $p \in W^2_1(0,1)$ (summable squared with the first derivative), and that the potential $q$ is defined as
\begin{equation}\label{con-q}
    q(x)=\nu'(x), \qquad \nu\in L^2(0,1).
\end{equation}

We consider the eigenvalue equation $\mathcal{L}y=\lambda y$. Introducing the substitution
\begin{equation}\label{repl}
    y=\exp{\left\{\frac{1}{2}\int\limits_0^xp(\xi)d\xi\right\}}z,
\end{equation}
we get the equation
\begin{equation}\label{eq-z}
    -z''+q(x)z+\left(\frac{p^2(x)}{4}-\frac{p'(x)}{2}\right)z=\lambda z,
\end{equation}
while the boundary conditions do not change:
\begin{equation}\label{bou-z}
    z(0)=z(1)=0.
\end{equation}

We introduce the quasi-derivative in the following form
$$z^{[1]}(x)=z'(x)-\nu(x)z(x),$$
then equation \eqref{eq-z} transforms to the equation
\begin{equation}\label{quasi}
    -\left(z^{[1]}\right)'-\nu(x)z^{[1]}+\left(-\nu^2(x)+\frac{p^2(x)}{4}-\frac{p'(x)}{2}\right)z=\lambda z.
\end{equation}
We introduce
$$\mathbf{z}(x)=\left(\begin{array}{c}
    z(x)  \\
    z^{[1]}(x) 
\end{array}\right)=\left(\begin{array}{c}
    \psi_1(x)  \\
    \psi_2(x) 
\end{array}\right),\qquad A=\left(\begin{array}{ccc}
    \nu & &1 \\
    -\nu^2+\frac{p^2}{4}-\frac{p'}{2}-\lambda & & -\nu
\end{array}\right),$$
then we pass from the \eqref{quasi} to the system
$$\mathbf{z}'(x)=A\mathbf{z}.$$

We make the substitution
$$\psi_1(x)=r(x)\sin \theta(x),\qquad \psi_2(x)=\lambda^\frac{1}{2}r(x)\cos \theta(x),$$
which is a modification of the Prufer substitution (\cite{Ince}). Here we have
\begin{equation}\label{theta}
\theta'(x,\lambda)=\lambda^\frac{1}{2}+\nu(x)\sin 2\theta(x,\lambda) +\lambda^{-\frac{1}{2}}\left(\nu^2(x)-\frac{p^2(x)}{4}+\frac{p'(x)}{2}\right)\sin^2 \theta(x,\lambda),
\end{equation}
\begin{equation}\label{r}
    r'(x,\lambda)=-r(x,\lambda)\left[\nu(x)\cos 2\theta(x,\lambda)+\frac{\lambda^{-\frac{1}{2}}}{2}\left(\nu^2(x)-\frac{p^2(x)}{4}+\frac{p'(x)}{2}\right)\sin 2\theta(x,\lambda)\right].
\end{equation}

The solution of the equation \eqref{theta} will be sought in the form $\theta(x,\lambda)=\lambda^\frac{1}{2} x+\eta(x,\lambda), $ where
$$\eta(x,\lambda)=\int\limits_0^x\nu(s)\sin 2\theta(s,\lambda)ds+\lambda^{-\frac{1}{2}}\int\limits_0^x\left(\nu^2(s)-\frac{p^2(s)}{4}+\frac{p'(s)}{2}\right)\sin^2 \theta(s,\lambda)ds.$$
Using the method of successive approximations, it is easy to show that this equation has a solution that is uniformly bounded for $0\leq x\leq 1$ and $\lambda\geq 1$. Since $p\in W^1_2(0,1), \, \nu\in L^2(0,1)$ and $\nu^2\in L^1(0,1)$, by virtue of the Riemann-Lebesgue lemma $\eta(x,\lambda)=o(1)$ at $\lambda \to \infty$. Therefore,
$$\theta(x,\lambda)=\lambda^\frac{1}{2}x+o(1),$$
moreover $\theta(0,\lambda)=0.$

Using the Riemann-Lebesgue lemma again, from equation \eqref{r} we find
\begin{eqnarray*}
r(x,\lambda)=\exp{\left(-\int\limits_0^x\nu(s)\cos 2\theta(s,\lambda)ds-\frac{\lambda^{-\frac{1}{2}}}{2}\int\limits_0^x\left(\nu^2(s)-\frac{p^2(s)}{4}+\frac{p'(s)}{2}\right)\sin 2\theta(s,\lambda)ds\right)}.
\end{eqnarray*}

Using the boundary conditions \eqref{bou-z} we obtain
$$ \psi_1(1,\lambda)=r(1,\lambda)\sin\theta(1,\lambda)=0,\,\, r(1,\lambda)\neq 0,\,\,\theta(1,\lambda)=\pi n.$$
Then the eigenvalues of the equation \eqref{eq-z} with the boundary conditions \eqref{bou-z} are given by
\begin{equation}\label{e-val}
    \lambda_n=(\pi n)^2(1+o(n^{-1})),\qquad n=1,2,...,
\end{equation}
and the corresponding eigenfunctions are
\begin{equation}\label{sol-SL}
    \Tilde{\psi}_n(x)=r_n(x)\sin(\sqrt{\lambda_n}x +\eta_n(x)).
\end{equation}

The first derivatives of $\Tilde{\psi_n}$ are then given by the formulas
\begin{equation}\label{phi-der}
    \Tilde{\psi}'_n(x)=\sqrt{\lambda_n}r_n(x)\cos(\theta_n(x))+\nu(x)\Tilde{\psi}_n(x).
\end{equation}

Let us estimate $\|\Tilde{\psi}_n\|_{L^2}$ using the formula \eqref{sol-SL} as follows
\begin{eqnarray}\label{est-high}
\|\Tilde{\psi}_n\|^2_{L^2}&=&\int\limits_0^1\left|r_n(x)\sin\left(\lambda_n^{\frac{1}{2}}x+\eta_n(x)\right)\right|^2dx\leq \int\limits_0^1\left|r_n(x)\right|^2dx\nonumber\\
&\leq& \int\limits_0^1\left|\exp\left(-\int\limits_0^x \nu(s)\cos{2\theta_n(s)}ds\right.\right.\nonumber\\
&-&\left.\left.\frac{1}{2}\frac{1}{\sqrt{\lambda_n}}\int\limits_0^x\left(\nu^2(s)-\frac{p^2(s)}{4}+\frac{p'(s)}{2}\right)\sin{2\theta_n(s)}ds\right)\right|^2dx\nonumber\\
&\lesssim& \int\limits_0^1\exp{\left(2\int\limits_0^x|\nu(s)|ds+\frac{1}{\sqrt{\lambda_n}}\left(\int\limits_0^x|\nu^2(s)|ds+\int\limits_0^x|p^2(s)|ds+\int\limits_0^x|p'(s)|ds\right)\right)}dx\nonumber\\
&\lesssim& \exp{\left(\|\nu\|_{L^1}+\lambda^{-\frac{1}{2}}_n\left(\|\nu\|^2_{L^2}+\|p\|^2_{L^2}+\|p'\|_{L^1}\right)\right)}<\infty,
\end{eqnarray}
since $\nu\in L^2(0,1)$, $p\in W^2_1(0,1)$ and $\lambda_n\to \infty$ at $n\to \infty$.

Also, according to Theorem 4 in \cite{Sav-Shk}, we have 
\begin{equation}\label{est_low}
  \Tilde{\psi}_n(x)=\sin(\pi nx)+o(1)  
\end{equation}
for sufficiently large $n$, it means that there exist some $C_0>0$, such that $C_0<\|\Tilde{\phi}_n\|_{L^2}<\infty$. 
Since the eigenfunctions \eqref{sol-SL} form an orthogonal basis in $L^2(0,1)$, we normalize them for further use
\begin{equation}\label{norm}
    \psi_n(x)=\frac{\Tilde{\psi}_n(x)}{\sqrt{\langle\Tilde{\psi}_n,\Tilde{\psi}_n\rangle}}=\frac{\Tilde{\psi}_n(x)}{\|\Tilde{\psi}_n\|_{L^2}}.
\end{equation}

Returning again to the substitution \eqref{repl}, we obtain the eigenfunctions
\begin{equation}\label{eig-f}
    \phi_n(x)=\exp{\left\{\frac{1}{2}\int\limits_0^xp(\xi)d\xi\right\}}\psi_n(x)
\end{equation}
of the operator $\mathcal{L}$ generated by the differential expression \eqref{St-L} with the boundary conditions \eqref{Dirihle}. In this case, the eigenvalues remain as \eqref{e-val}. It should be noted that the eigenfunctions $\phi_n$ are orthogonal in the weighted space $L^2_g(0,1)$ with norm
$$\|\phi_n\|_{L^2_g}^2=\int\limits_0^1\left|g(x)\phi_n(x)\right|^2dx,$$
where
$$g(x)=\exp{\left\{-\frac{1}{2}\int\limits_0^xp(\xi)d\xi\right\}}.$$

Let us estimate the norm of $\phi_n$ in $L^2(0,1)$ as
\begin{eqnarray}\label{norm-phi}
\|\phi_n\|^2_{L^2}&=&\int\limits_0^1\exp{\left\{\int\limits_0^xp(\xi)d\xi\right\}}|\psi_n(x)|^2dx\nonumber\\
&\leq& \exp{\left\{\int\limits_0^1|p(x)|dx\right\}}\int\limits_0^1|\psi_n(x)|^2dx\leq\exp{\left\{\|p\|_{L^1}\right\}}<\infty,    
\end{eqnarray}
since $p\in W^2_1(0,1)$ and $\|\psi_n\|_{L^2}=1$.

\section{Main results}

We consider the wave equation
\begin{equation}\label{C.p1}
         \partial^2_t u(t,x)+\mathcal{L} u(t,x)=0,\qquad (t,x)\in [0,T]\times (0,1),
\end{equation}
with initial conditions
\begin{equation}\label{C.p2} \left\{\begin{array}{l}u(0,x)=u_0(x),\,\,\, x\in (0,1), \\
\partial_t u(0,x)=u_1(x), \,\,\, x\in (0,1),\end{array}\right.\end{equation}
and with Dirichlet boundary conditions
\begin{equation}\label{C.p3}
u(t,0)=0=u(t,1),\qquad t\in [0,T],
\end{equation}
where  $\mathcal{L}$ is defined by

\begin{equation}\label{1}
    \mathcal{L} u(t,x):=-\partial^2_x u(t,x)+p(x)\partial_xu(t,x)+ q(x)u(t,x),\qquad x\in(0,1),
\end{equation}
where $p\in W^2_1(0,1)$, and $q$ is defined as in \eqref{con-q}. 

In our results below, concerning the initial/boundary problem \eqref{C.p1}-\eqref{C.p3}, as the preliminary step we first carry out the analysis in the strictly regular case for summable $q \in L^2(0,1)$. In this case, we obtain the well-posedness in the Sobolev spaces $W^k_\mathcal{L}$ associated to the operator $\mathcal{L}$: we define the Sobolev spaces $W^k_\mathcal{L}$ associated to $\mathcal{L}$, for any $k \in \mathbb{R}$, as the space  
$$W^k_{\mathcal{L}}:=\left\{f\in \mathcal{D}'_\mathcal{L}(0,1):\,\mathcal{L}^{k/2}f\in L^2(0,1)\right\},$$
with the norm $\|f\|_{W^k_{\mathcal{L}}}:=\|\mathcal{L}^{k/2}f\|_{L^2}$. The global space of distributions $\mathcal{D}'_\mathcal{L}(0,1)$ is defined as bellow.

The space $C^\infty_\mathcal{L}(0,1):=\mathrm{Dom}(\mathcal{L}^\infty)$ is called the space of test functions for $\mathcal{L}$, where we define 
$$\mathrm{Dom}(\mathcal{L}^\infty):=\bigcap\limits_{m=1}^\infty \mathrm{Dom}(\mathcal{L}^m),$$
where $\mathrm{Dom}(\mathcal{L}^m)$ is the domain of the operator $\mathcal{L}^m$, in turn defined as
$$\mathrm{Dom}(\mathcal{L}^m):=\left\{f\in L^2(0,1): \mathcal{L}^j f\in \mathrm{Dom}(\mathcal{L}),\,\, j=0,1,2,...,m-1\right\}.$$
The Fréchet topology of $C^\infty_\mathcal{L}(0,1)$ is given by the family of norms 
\begin{equation}\label{frechet}
    \|\phi\|_{C^m_\mathcal{L}}:=\max\limits_{j\leq m}\|\mathcal{L}^j\phi\|_{L^2(0,1)},\quad m\in \mathbb{N}_0,\,\, \phi\in C^\infty_\mathcal{L}(0,1).
\end{equation}
The space of $\mathcal{L}$-distributions
$$\mathcal{D}'_\mathcal{L}:=\mathbf{L}\left(C^\infty_\mathcal{L}(0,1),\mathbb{C}\right)$$
is the space of all linear continuous functionals on $C^\infty_\mathcal{L}(0,1)$. For $\omega \in \mathcal{D}'_\mathcal{L}(0,1)$ and $\phi\in C^\infty_\mathcal{L}(0,1)$, we shall write 
$$\omega(\phi)=\langle \omega, \phi\rangle.$$
For any $\psi \in C^\infty_\mathcal{L}(0,1)$, the functional 
$$C^\infty_\mathcal{L}(0,1)\ni \phi \mapsto \int\limits_0^1 \psi(x)\phi(x)dx$$
is an $\mathcal{L}$-distribution, which gives an embedding $\psi \in C^\infty_\mathcal{L}(0,1)\hookrightarrow \mathcal{D}'_\mathcal{L}(0,1)$.

We introduce the spaces $C^j([0,T],W^k_\mathcal{L}(0,1))$, given by the family of norms
\begin{equation}
    \|f\|_{C^n([0,T],W^k_\mathcal{L}(0,1))}=\max\limits_{0\leq t\leq T}\sum\limits_{j=0}^n\left\|\partial^j_t f(t,\cdot)\right\|_{W^k_\mathcal{L}},
\end{equation}
where $k\in \mathbb{R}, \, f\in C^n([0,T],W^k_\mathcal{L}(0,1)).$
 
\begin{thm}\label{th1}
Assume that $p'\in L^2(0,1)$, $q=\nu'$, $\nu \in L^\infty(0,1)$. For any $k\in \mathbb{R}$ if the initial data satisfy $(u_0,\, u_1) \in W^{1+k}_{\mathcal{L}}\times W^k_{\mathcal{L}}$ then the wave equation \eqref{C.p1} with the initial/boundary problem  \eqref{C.p2}-\eqref{C.p3}  has unique solution $u\in C([0,T], W^{1+k}_{\mathcal{L}})\cap C^1([0,T], W^{k}_{\mathcal{L}})$. It satisfies the estimates
\begin{equation}\label{est1}
    \|u(t,\cdot)\|^2_{L^2}\lesssim \exp{\left\{\|p\|_{L^1}\right\}}\left(\|gu_0\|^2_{L^2}+\|gu_1\|^2_{W^{-1}_{\mathcal{L}}}\right),
\end{equation}
\begin{equation}\label{est2}
\|\partial_t u(t,\cdot)\|^2_{L^2}\lesssim \exp{\left\{\|p\|_{L^1}\right\}}\left(\|gu_0\|^2_{W^1_{\mathcal{L}}}+\|gu_1\|^2_{L^2}\right),
\end{equation}
\begin{eqnarray}\label{est3}
\|\partial_x u(t,\cdot)\|^2_{L^2}
&\lesssim& \exp{\left\{\|p\|_{L^1}\right\}}\left\{\left(1+\|\nu\|^2_{L^2}\left(\|\nu\|^2_{L^2}+\|p\|^2_{L^2}+\|p'\|^2_{L^1}\right)\right)\left(\|gu_0\|^2_{W^1_{\mathcal{L}}}\right.\right.\nonumber\\
&+& \left.\|gu_1\|^2_{L^2}\Big)+\left(\|p\|^2_{L^\infty}+\|\nu\|^2_{L^\infty}\right)\left(\|gu_0\|^2_{L^2}+\|gu_1\|^2_{W^{-1}_{\mathcal{L}}}\right)\right\},
\end{eqnarray}
\begin{eqnarray}\label{est4}
\left\|\partial^2_xu(t,\cdot)\right\|^2_{L^2} 
&\lesssim &\exp{\{\|p\|_{L^1}\}}\left\{\|p\|^2_{L^\infty}\left(\left(1+\|\nu\|^2_{L^2}\left(\|\nu\|^2_{L^2}+\|p\|^2_{L^2}+\|p'\|^2_{L^1}\right)\right)\times\right.\right.\nonumber\\
&\times& \left.\left(\|gu_0\|^2_{W^1_{\mathcal{L}}}+\|gu_1\|^2_{L^2}\right)+\left(\|p\|^2_{L^\infty}+\|\nu\|^2_{L^\infty}\right)\left(\|gu_0\|^2_{L^2}+\|gu_1\|^2_{W^{-1}_{\mathcal{L}}}\right)\right)\nonumber\\
&+&\left.\|q\|^2_{L^\infty}\left(\|gu_0\|^2_{L^2}+\|gu_1\|^2_{W^{-1}_{\mathcal{L}}}\right)+\left\|gu_0\right\|^2_{W^2_{\mathcal{L}}}+\|gu_1\|^2_{W^1_{\mathcal{L}}}\right\},
\end{eqnarray}
\begin{equation}\label{est5}
    \|u(t,\cdot)\|^2_{W^k_\mathcal{L}} \lesssim \exp{\left\{\|p\|_{L^1}\right\}}\left(\left\|gu_0\right\|^2_{W^k_{\mathcal{L}}}+\left\|gu_1\right\|^2_{W^{k-1}_{\mathcal{L}}}\right),
\end{equation}
where the constants in these inequalities are independent of $u_0$, $u_1$, $p$ and $q$.
\end{thm}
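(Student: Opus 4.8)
\emph{Strategy and reduction.} The plan is to solve \eqref{C.p1}--\eqref{C.p3} by separation of variables in the eigenbasis constructed in Section~1, reduce everything to the scalar Cauchy problems for the Fourier coefficients, and then read all five estimates off the explicit solution of these problems together with the eigenfunction bounds \eqref{sol-SL}--\eqref{norm-phi}. Concretely, put $g(x)=\exp\{-\tfrac12\int_0^x p\}$ and $v:=gu$; since $\mathcal{L}(g^{-1}z)=g^{-1}\mathcal{L}_0z$ with $\mathcal{L}_0z:=-z''+qz+(\tfrac{p^2}{4}-\tfrac{p'}{2})z$ (this is exactly \eqref{eq-z}), the function $v$ solves $\partial_t^2v+\mathcal{L}_0v=0$ with Dirichlet conditions and $v(0,\cdot)=gu_0$, $\partial_tv(0,\cdot)=gu_1$. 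Expanding $v(t,\cdot)=\sum_{n\ge1}\hat v_n(t)\psi_n$ in the $L^2(0,1)$-orthonormal eigenbasis $\{\psi_n\}$ of \eqref{norm} (so $\mathcal{L}_0\psi_n=\lambda_n\psi_n$) decouples the problem into $\hat v_n''+\lambda_n\hat v_n=0$, $\hat v_n(0)=\widehat{gu_0}(n)$, $\hat v_n'(0)=\widehat{gu_1}(n)$; hence, for the cofinitely many $n$ with $\lambda_n>0$ (cf.\ \eqref{e-val}; the finitely many remaining modes contribute only $T$-dependent constants), $\hat v_n(t)=\widehat{gu_0}(n)\cos(\sqrt{\lambda_n}t)+\widehat{gu_1}(n)\,\lambda_n^{-1/2}\sin(\sqrt{\lambda_n}t)$. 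Existence is then obtained by showing that this series and its $t$-derivative converge in the asserted spaces; uniqueness is immediate, since zero data force every $\hat v_n\equiv0$.

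\emph{Estimates in $W^k_{\mathcal{L}}$.} From the explicit formula one has, uniformly in $t\in[0,T]$, $|\hat v_n(t)|^2\lesssim|\widehat{gu_0}(n)|^2+\lambda_n^{-1}|\widehat{gu_1}(n)|^2$ and $|\hat v_n'(t)|^2\lesssim\lambda_n|\widehat{gu_0}(n)|^2+|\widehat{gu_1}(n)|^2$. Multiplying by $\lambda_n^k$, summing over $n$, and using Parseval for $\{\psi_n\}$ gives $\|v(t,\cdot)\|_{W^k_{\mathcal{L}_0}}^2\lesssim\|gu_0\|_{W^k_{\mathcal{L}_0}}^2+\|gu_1\|_{W^{k-1}_{\mathcal{L}_0}}^2$ and the corresponding bound for $\partial_tv$. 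Returning to $u=g^{-1}v$ via the intertwining $\mathcal{L}_0^{k/2}(gf)=g\,\mathcal{L}^{k/2}f$ and the two-sided bound $e^{-\frac12\|p\|_{L^1}}\le g^{\pm1}\le e^{\frac12\|p\|_{L^1}}$ turns these into \eqref{est5}, and the special cases $k=0$ and $k=1$ into \eqref{est1} and \eqref{est2}; the same uniform-in-$t$ bounds justify the convergence in $C([0,T],W^{1+k}_{\mathcal{L}})\cap C^1([0,T],W^k_{\mathcal{L}})$ and the term-by-term differentiation used above.

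\emph{Estimates for $\partial_xu$ and $\partial_x^2u$.} The operator $\partial_x$ is not diagonal in $\{\psi_n\}$, so I would differentiate the series itself. Using the quasi-derivative $\psi_n^{[1]}=\psi_n'-\nu\psi_n$ and $\partial_xu=g^{-1}\big((\tfrac12 p+\nu)v+v^{[1]}\big)$ with $v^{[1]}=\sum_n\hat v_n(t)\psi_n^{[1]}$, the task reduces to the Bessel-type inequality $\big\|\sum_nc_n\psi_n^{[1]}\big\|_{L^2}^2\lesssim\big(1+\|\nu\|_{L^2}^2(\|\nu\|_{L^2}^2+\|p\|_{L^2}^2+\|p'\|_{L^1}^2)\big)\sum_n\lambda_n|c_n|^2$. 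I would prove this from the Pr\"ufer representation $\psi_n^{[1]}=\sqrt{\lambda_n}\,r_n\cos\theta_n/\|\tilde\psi_n\|_{L^2}$ (read off from \eqref{phi-der}, \eqref{sol-SL}, \eqref{norm}) together with the two-sided bounds on $\|\tilde\psi_n\|_{L^2}$ from \eqref{est-high}--\eqref{est_low}, the analogous sup-norm bound on $r_n$, and the asymptotics $\theta_n(x)=\pi n x+o(1)$; equivalently, via the quadratic-form identity $\|v^{[1]}\|_{L^2}^2=\langle\mathcal{L}_0v,v\rangle+\int\nu^2|v|^2-\int(\tfrac{p^2}{4}-\tfrac{p'}{2})|v|^2$ combined with the Poincar\'e--Sobolev bound $\|v\|_{L^\infty}\lesssim\|v'\|_{L^2}$ on $(0,1)$ and absorption of the resulting $\|v'\|_{L^2}$-terms. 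Applying the inequality with $c_n=\hat v_n(t)$ and invoking the $W^1_{\mathcal{L}_0}$- and $L^2$-bounds of the previous step yields \eqref{est3}. Finally \eqref{est4} follows from the equation itself: $\partial_x^2u=p\,\partial_xu+qu-\mathcal{L}u$, so $\|\partial_x^2u(t,\cdot)\|_{L^2}^2\lesssim\|p\|_{L^\infty}^2\|\partial_xu(t,\cdot)\|_{L^2}^2+\|q\|_{L^\infty}^2\|u(t,\cdot)\|_{L^2}^2+\|u(t,\cdot)\|_{W^2_{\mathcal{L}}}^2$, into which one substitutes \eqref{est3}, \eqref{est1}, and \eqref{est5} with $k=2$.

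\emph{Main obstacle.} The genuinely delicate step is the Bessel-type inequality for the quasi-derivatives $\{\psi_n^{[1]}\}$: unlike the $W^k_{\mathcal{L}}$-estimates it does not follow from abstract spectral theory (these functions are not orthogonal), and obtaining the precise constant $1+\|\nu\|_{L^2}^2(\|\nu\|_{L^2}^2+\|p\|_{L^2}^2+\|p'\|_{L^1}^2)$ forces one to keep careful, uniform-in-$(n,x)$ control of $r_n$ and $\theta_n$ from the modified Pr\"ufer transform and to compare $\{\psi_n^{[1]}/\sqrt{\lambda_n}\}$ with the trigonometric system (or to carry out the absorption argument without any smallness assumption on the coefficients). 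A secondary technical point is to isolate the finitely many non-positive eigenvalues so that they affect only $T$-dependent constants, and --- for \eqref{est4} --- to interpret $\|q\|_{L^\infty}$, which is finite precisely in the regular regime in which that estimate is asserted.
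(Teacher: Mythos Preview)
Your plan is essentially the paper's own argument. The paper also separates variables, writes $u(t,x)=\sum_n\big(A_n\cos(\sqrt{\lambda_n}t)+\lambda_n^{-1/2}B_n\sin(\sqrt{\lambda_n}t)\big)\phi_n(x)$ with $A_n=\langle gu_0,\psi_n\rangle$, $B_n=\langle gu_1,\psi_n\rangle$, and obtains \eqref{est1}, \eqref{est2}, \eqref{est5} by Parseval in the $\{\psi_n\}$-basis together with the pointwise bound $\|\phi_n\|_{L^2}^2\le e^{\|p\|_{L^1}}$; for \eqref{est3} it differentiates $\phi_n$ via the Pr\"ufer representation exactly as you propose (obtaining the two pieces $\sqrt{\lambda_n}\,r_n\cos\theta_n/\|\tilde\psi_n\|$ and $(\tfrac{p}{2}+\nu)\psi_n$), and for \eqref{est4} it uses $\phi_n''=p\phi_n'+(q-\lambda_n)\phi_n$, which is your identity $\partial_x^2u=p\,\partial_xu+qu-\mathcal{L}u$ read mode-by-mode.

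Two minor points of comparison. First, your preliminary change of unknown $v=gu$ is cleaner than the paper's presentation, which works directly with $u$ and the non-orthogonal $\{\phi_n\}$ and repeatedly inserts the weight bound $|g^{-1}|\le e^{\|p\|_{L^1}/2}$; the content is identical. Second, the step you single out as the ``main obstacle'' --- the Bessel-type inequality for $\sum_n c_n\psi_n^{[1]}$ --- is exactly the step the paper treats most lightly: it passes from $\big\|\sum_n\cdots\big\|^2$ to $\sum_n|\cdots|^2\|r_n\|_{L^2}^2$ without isolating the almost-orthogonality of $\{r_n\cos\theta_n\}$, and then invokes Lemma~1 of Savchuk for the uniform bound $\|r_n\|_{L^2}^2\lesssim 1+\|\nu\|_{L^2}^2(\|\nu\|_{L^2}^2+\|p\|_{L^2}^2+\|p'\|_{L^1}^2)$. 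Your proposed routes (Pr\"ufer asymptotics and comparison with the trigonometric system, or the quadratic-form identity) are the natural ways to make that passage rigorous; the quoted constant comes precisely from the $r_n$-bound, so either route should recover it.
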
 

We note that $p'\in L^2(0,1)$ implies that $\|p\|_{L^\infty}\leq |p(0)|+\|p'\|_{L^2(0,1)}.$
Indeed, if $p'\in L^2(0,1)$, then 
$$|p(x)|=\left|\int\limits_0^xp'(\xi)d\xi+p(0)\right|\leq |p(0)|+\|p'\|_{L^2}<\infty.$$

\begin{proof}
Let us apply the technique of the separation of variables (see, e.g. \cite{Separ}). This method involves finding a solution of a certain form. In particular, we are looking for a solution of the form
$$u(t,x)=T(t)X(x),$$
for functions $T(t)$, $X(x)$ to be determined. Suppose we can find a solution of \eqref{C.p1} of this form. Plugging a function $u(t,x)=T(t)X(x)$ into the wave equation, we arrive at the equation
$$T''(t)X(x)-T(t)X''(x)+p(x)T(x)X'(x)+q(x)T(t)X(x)=0,$$
Dividing this equation by $T(t)X(x)$, we have
\begin{equation}\label{3-1}
    \frac{T''(t)}{T(t)}=\frac{X''(x)-p(x)X'(x)-q(x)X(x)}{X(x)}=-\lambda,
\end{equation}
for some constant $\lambda$. Therefore, if there exists a solution $u(t,x) = T(t)X(x)$ of the wave equation, then $T(t)$ and $X(x)$ must satisfy the equations
$$\frac{T''(t)}{T(t)}=-\lambda,$$
$$\frac{X''(x)-p(x)X'(x)-q(x)X(x)}{X(x)}=-\lambda,$$
for some constant $\lambda$. In addition, in order for $u$ to satisfy the boundary conditions \eqref{C.p3}, we need our function $X$ to satisfy the boundary conditions \eqref{Dirihle}. That is, we need to find a function $X$ and a scalar $\lambda$, such that
\begin{equation}\label{4}
    -X''(x)+p(x)X'(x)+q(x)X(x)=\lambda X(x),
\end{equation}
\begin{equation}\label{5}
    X(0)=X(1)=0.
\end{equation}

The equation \eqref{4} with the boundary conditions \eqref{5} has the eigenvalues of the form \eqref{e-val} with the corresponding eigenfunctions of the form \eqref{eig-f} of the Sturm-Liouville operator $\mathcal{L}$ generated by the differential expression \eqref{St-L}.

Further, we solve the left hand side of the equation \eqref{3-1} respect to the independent variable $t$,
\begin{equation}\label{3}
        T''(t)=-\lambda T(t),  \qquad t\in [0,T].
\end{equation}

It is well known (\cite{Separ}) that the solution of the equation \eqref{3} with the initial conditions \eqref{C.p2} is
$$T(t)=A_n \cos \sqrt{\lambda_n}t+\frac{1}{\sqrt{\lambda_n}}B_n \sin\left(\sqrt{\lambda_n}t\right).$$
Then the solution of equation \eqref{C.p1} is given by
\begin{equation}\label{part-sol}
    u(t,x)=\left(A_n\cos\left(\sqrt{\lambda_n}t\right)+\frac{1}{\sqrt{\lambda_n}}B_n \sin\left(\sqrt{\lambda_n}t\right)\right)\phi_n(x).
\end{equation}
For each value of $n$ equation \eqref{part-sol} is a solution. By the superposition principle the sum of all these solution is also a solution
\begin{equation}\label{23}
    u(t,x)=\sum\limits_{n=1}^\infty\left(A_n\cos\left(\sqrt{\lambda_n}t\right)+\frac{1}{\sqrt{\lambda_n}}B_n \sin\left(\sqrt{\lambda_n}t\right)\right)\phi_n(x).
\end{equation}
Applying the initial conditions to equation \eqref{23}, we have
\begin{equation}\label{u0u1}
u_0(x)=\sum\limits_{n=1}^\infty A_n\phi_n(x),\qquad u_1(x)=\sum\limits_{n=1}^\infty B_n\phi_n(x),
\end{equation}
multiplying both sides of each equation in \eqref{u0u1} by $g(x)\psi_m(x)$, we get
\begin{equation}\label{u0u1-g}
\begin{array}{l}
     u_0(x)g(x)\psi_m(x)=\sum\limits_{n=1}^\infty A_n\psi_n(x)\psi_m(x),\\
     u_1(x)g(x)\psi_m(x)=\sum\limits_{n=1}^\infty B_n\psi_n(x)\psi_m(x).
\end{array}
\end{equation}
Note that
$$g(x)=\exp{\left\{-\frac{1}{2}\int\limits_0^xp(\xi)d\xi\right\}},\quad \phi_n(x)g(x)=\psi_n(x).$$
Integrating over $(0,1)$ in \eqref{u0u1-g}, taking into account the orthonormality of $\psi_n$ in $L^2(0,1)$, we obtain
$$A_n=\int\limits_0^1u_0(x)g(x)\psi_n(x)dx, \quad B_n=\int\limits_0^1 u_1(x)g(x)\psi_n(x)dx.$$

Further we will prove that $u\in C^2([0,T],L^2(0,1))$. By using the Cauchy-Schwarz inequality and fixed $t$, we can deduce that

\begin{eqnarray}\label{25}
\|u(t, \cdot)\|^2_{L^2}&=&\int\limits_0^1|u(t,x)|^2dx \nonumber\\
&=&\int\limits_0^1\left|\sum\limits_{n=1}^\infty\left[A_n \cos \sqrt{\lambda_n} t+\frac{1}{\sqrt{\lambda_n}} B_n\sin\sqrt{\lambda_n} t\right]\phi_n(x)\right|^2dx\nonumber\\
&\lesssim& \int\limits_0^1\sum\limits_{n=1}^\infty\left|A_n \cos\sqrt{\lambda_n} t+\frac{1}{\sqrt{\lambda_n} }B_n\sin\sqrt{\lambda_n} t\right|^2|\phi_n(x)|^2dx\nonumber\\
&\leq& \int\limits_0^1\sum\limits_{n=1}^\infty\left(|A_n||\phi_n(x)|+\frac{1}{\sqrt{\lambda_n}}|B_n||\phi_n(x)|\right)^2 dx\nonumber\\
&\lesssim& \sum\limits_{n=1}^\infty\left(\int\limits_0^1|A_n|^2|\phi_n(x)|^2dx+\int\limits_0^1\left|\frac{B_n}{\sqrt{\lambda_n}}\right|^2|\phi_n(x)|^2dx\right). 
\end{eqnarray}
By using the Parseval identity and taking into account \eqref{norm-phi},
we get
\begin{eqnarray}\label{An1}
\sum\limits_{n=1}^\infty \int\limits_0^1|A_n|^2|\phi_n(x)|^2dx&\leq& \exp{\left\{\|p\|_{L^1}\right\}}\sum\limits_{n=1}^\infty |A_n|^2\nonumber\\
&=&\exp{\left\{\|p\|_{L^1}\right\}}\sum\limits_{n=1}^\infty\left|\int\limits_0^1u_0(x)g(x)\psi_n(x)dx\right|^2\nonumber\\
&=&\exp{\left\{\|p\|_{L^1}\right\}}\sum\limits_{n=1}^\infty\left|\langle (g u_0), \psi_n\rangle\right|^2\leq \exp{\left\{\|p\|_{L^1}\right\}}\|g u_0\|^2_{L^2}\nonumber\\
&\leq&\exp{\left\{\|p\|_{L^1}\right\}}\|g\|^2_{L^\infty}\|u_0\|^2_{L^2}.
\end{eqnarray}
Now, let us estimate $\|g\|_{L^\infty}$, where
$$g(x)=\exp{\left\{-\frac{1}{2}\int\limits_0^xp(\xi)d\xi\right\}}.$$
If $p\geq0$ at $x\in (0,1)$, then $\|g\|_{L^\infty}=1$. Otherwise, when we do not have $p\geq0$, then
\begin{equation}\label{g-exp}
\|g\|^2_{L^\infty}=\esssup_{x\in(0, 1)}|g(x)|^2\leq \exp{\left\{\int\limits_0^1|p(x)|dx\right\}}=\exp{\left\{\|p\|_{L^1}\right\}}.
\end{equation}
According to the last expressions, we have
\begin{eqnarray}\label{An}
\sum\limits_{n=1}^\infty \int\limits_0^1|A_n|^2|\phi_n(x)|^2dx
&\leq&\exp{\left\{\|p\|_{L^1}\right\}}\|g\|^2_{L^\infty}\|u_0\|^2_{L^2}\nonumber\\
&\leq&\exp{\left\{2\|p\|_{L^1}\right\}}\|u_0\|^2_{L^2}.
\end{eqnarray}

For the second term in \eqref{25}, using \eqref{norm-phi}, the properties of the eigenvalues of the operator $\mathcal{L}$ and the Parseval's identity, we obtain the following estimate 
\begin{eqnarray*}
\sum\limits_{n=1}^\infty \int\limits_0^1\left|\frac{B_n}{\sqrt{\lambda_n}}\right|^2|\phi_n(x)|^2dx&\leq& \exp{\left\{\|p\|_{L^1}\right\}}\sum\limits_{n=1}^\infty \left|\frac{B_n}{\sqrt{\lambda_n}}\right|^2\\
&=&\exp{\left\{\|p\|_{L^1}\right\}}\sum\limits_{n=1}^\infty \left|\int\limits_0^1\frac{1}{\sqrt{\lambda_n}}u_1(x)g(x)\psi_n(x)dx\right|^2\\
&=&\exp{\left\{\|p\|_{L^1}\right\}}\sum\limits_{n=1}^\infty \left|\langle gu_1,\mathcal{L}^{-\frac{1}{2}}\psi_n\rangle\right|^2\\
&=&\exp{\left\{\|p\|_{L^1}\right\}}\sum\limits_{n=1}^\infty \left|\langle \mathcal{L}^{-\frac{1}{2}}\left(gu_1\right),\psi_n\rangle\right|^2\\
&=&\exp{\left\{\|p\|_{L^1}\right\}}\left\|\mathcal{L}^{-\frac{1}{2}}\left(gu_{1}\right)\right\|^2_{L^2}\leq \exp{\left\{\|p\|_{L^1}\right\}}\left\|gu_{1}\right\|^2_{W^{-1}_{\mathcal{L}}}.
\end{eqnarray*}

Therefore
$$
\|u(t,\cdot)\|^2_{L^2}\lesssim \exp{\left\{\|p\|_{L^1}\right\}}\left(\|gu_0\|^2_{L^2}+\|gu_1\|^2_{W^{-1}_{\mathcal{L}}}\right).
$$

Now, let us estimate
\begin{eqnarray}\label{t26}
\|\partial_t u(t,\cdot)\|^2&=&\int\limits_0^1|\partial_tu(t,x)|^2dt\nonumber\\
&=&\int\limits_0^1\left|\sum\limits_{n=1}^\infty\left[-\sqrt{\lambda_n}A_n\sin\left(\sqrt{\lambda_n}t\right)+\frac{1}{\sqrt{\lambda_n}}\sqrt{\lambda_n}B_n\cos \sqrt{\lambda_n} t\right]\phi_n(x)\right|^2dx \nonumber\\ 
&\lesssim& \exp{\left\{\|p\|_{L^1}\right\}}\left(\sum\limits_{n=1}^\infty|\sqrt{\lambda_n} A_n |^2+\sum\limits_{n=1}^\infty|B_n|^2\right).
\end{eqnarray}
The second term of \eqref{t26} gives the norm of $\|gu_1\|^2_{L^2}$ by the Parseval identity. Since $\lambda_n$ are eigenvalues and $\phi_n$ are eigenfunctions of the operator $\mathcal{L}$, we obtain 
\begin{eqnarray}\label{21-1}\sum\limits_{n=1}^\infty|\sqrt{\lambda_n}A_n|^2&=& \sum\limits_{n=1}^\infty\left|\sqrt{\lambda_n}\int\limits_0^1 g(x)u_0(x)\psi_n(x)dx\right|^2 \nonumber\\
&\leq& \sum\limits_{n=1}^\infty\left| \int\limits_0^1 \mathcal{L}^\frac{1}{2}\left(gu_0\right)\psi_n(x)dx\right|^2.
      \end{eqnarray}
It is known by Parseval's identity that
$$\sum\limits_{n=1}^\infty\left| \int\limits_0^1 \mathcal{L}^\frac{1}{2}\left(gu_0\right)\psi_n(x)dx\right|^2=\|\mathcal{L}^\frac{1}{2}\left(gu_0\right)\|^2_{L^2}=\|gu_0\|^2_{W^1_{\mathcal{L}}}.$$
Thus, 
$$\|\partial_t u(t,\cdot)\|^2_{L^2}\lesssim \exp{\left\{\|p\|_{L^1}\right\}}\left(\|gu_0\|^2_{W^1_{\mathcal{L}}}+\|gu_1\|^2_{L^2}\right).$$

We now consider the next estimate for the derivative
\begin{eqnarray*}
\|\partial_x u(t,\cdot)\|^2_{L^2}&=&\int\limits_0^1|\partial_xu(t,x)|^2dt\\
&=&\int\limits_0^1\left|\sum\limits_{n=1}^\infty\left[A_n\cos\left(\sqrt{\lambda_n}t\right)+\frac{1}{\sqrt{\lambda_n}}B_n\sin\left(\sqrt{\lambda_n}t\right)\right]\phi'_n(x)\right|^2dx,
\end{eqnarray*}
where $\phi'(x)$, taking into account \eqref{sol-SL}, \eqref{norm} and \eqref{eig-f}, is given by
\begin{eqnarray}\label{phi'}
    \phi'_n(x)&=&\left(\exp{\left\{\frac{1}{2}\int\limits_0^xp(\xi)d\xi\right\}}\psi_n(x)\right)'=\exp{\left\{\frac{1}{2}\int\limits_0^xp(\xi)d\xi\right\}}\times\nonumber\\
    &\times&\left(\frac{\sqrt{\lambda_n}r_n(x)}{\|\Tilde{\psi}_n\|_{L^2}}\cos{\theta_n(x)}+\left(\frac{p(x)}{2}+\nu(x)\right)\psi_n(x)\right).
\end{eqnarray}
By using formula \eqref{phi'} let us estimate
\begin{eqnarray*}
\|\partial_x u(t,\cdot)\|^2_{L^2}&=&\int\limits_0^1\left|\sum\limits_{n=1}^\infty\left[A_n\cos \sqrt{\lambda_n}t+\frac{1}{\sqrt{\lambda_n}}B_n\sin\left(\sqrt{\lambda_n}t\right)\right]\exp{\left\{\frac{1}{2}\int\limits_0^xp(\xi)d\xi\right\}}\times \right. \nonumber\\
&\times&\left.\left(\frac{\sqrt{\lambda_n}r_n(x)}{\|\Tilde{\psi}_n\|_{L^2}}\cos{\theta_n(x)}+\left(\frac{p(x)}{2}+\nu(x)\right)\psi_n(x)\right)\right|^2dx\nonumber\\
&\lesssim&\sum\limits_{n=1}^\infty\left[|A_n|^2+\left|\frac{1}{\sqrt{\lambda_n}}B_n\right|^2\right]\int\limits_0^1\exp{\left\{\int\limits_0^xp(\xi)d\xi\right\}}\left|\frac{\sqrt{\lambda_n}r_n(x)}{\|\Tilde{\psi}_n\|_{L^2}}\right|^2dx  \nonumber\\
&+&\sum\limits_{n=1}^\infty\left[|A_n|^2+\left|\frac{1}{\sqrt{\lambda_n}}B_n\right|^2\right]\times\\
&\times&\int\limits_0^1\exp{\left\{\int\limits_0^xp(\xi)d\xi\right\}}\left|\left(\frac{p(x)}{2}+\nu(x)\right)\psi_n(x)\right|^2dx.
\end{eqnarray*}
According \eqref{est-high} and \eqref{est_low}, there exist some $C_0>0$, such that $C_0<\|\Tilde{\psi}_n\|_{L^2}<\infty$, and taking into account \eqref{norm-phi} we get
\begin{eqnarray}\label{38}
\|\partial_x u(t,\cdot)\|^2_{L^2}&\lesssim&\exp{\left\{\|p\|_{L^1}\right\}}\sum\limits_{n=1}^\infty\left[\sqrt{\lambda_n}A_n|^2+\left|B_n\right|^2\right]\int\limits_0^1\left|r_n(x)\right|^2dx+ \exp{\left\{\|p\|_{L^1}\right\}}\times \nonumber\\
&\times&\sum\limits_{n=1}^\infty\left[|A_n|^2+\left|\frac{1}{\sqrt{\lambda_n}}B_n\right|^2\right]\int\limits_0^1\left|\left(\frac{p(x)}{2}+\nu(x)\right)\psi_n(x)\right|^2dx.
\end{eqnarray}
We follow the proof of Lemma 1 in \cite{Savch} to obtain
$$r_n(x)=1+\rho_n(x),\quad \|\rho_n\|^2_{L^2}\lesssim \left(1+\|\nu\|^2_{L^2}\right)\left(\|\nu\|^2_{L^2}+\|p\|^2_{L^2}+\|p'\|^2_{L^1}\right),$$
where the constant is independent of $\nu$ and $n$. Then
\begin{equation}\label{39}
  \|r_n\|^2_{L^2}\lesssim 1+\|\nu\|^2_{L^2}\left(\|\nu\|^2_{L^2}+\|p\|^2_{L^2}+\|p'\|^2_{L^1}\right).  
\end{equation}
For the second term we obtain
\begin{equation}\label{40}
  \int\limits_0^1\left|\left(\frac{p(x)}{2}+\nu(x)\right)\psi_n(x)\right|^2dx\lesssim \left(\|p\|^2_{L^\infty}+\|\nu\|^2_{L^\infty}\right)\|\psi_n\|^2_{L^2}=\|p\|^2_{L^\infty}+\|\nu\|^2_{L^\infty} , 
\end{equation}
since $\{\psi_n\}$ is an orthonormal basis in $L^2$. Using the last relations we can obtain the estimate for $\|\psi'_n\|_{L^2}$ as the following form
\begin{eqnarray}\label{psi'-norm}
\|\psi'_n\|^2&\lesssim& \exp{\left\{\|p\|_{L^1}\right\}}\left(\sqrt{\lambda_n}\left(1+\|\nu\|^2_{L^2}\left(\|\nu\|^2_{L^2}+\|p\|^2_{L^2}+\|p'\|^2_{L^1}\right)\right)\right.\nonumber\\
&+&\|p\|^2_{L^\infty}+\|\nu\|^2_{L^\infty}\Big).
\end{eqnarray}
Using \eqref{38}, \eqref{39}, \eqref{40}, \eqref{21-1} and \eqref{An1} we obtain
\begin{eqnarray}\label{u_x}
\|\partial_x u(t,\cdot)\|^2_{L^2}&\lesssim&
\exp{\left\{\|p\|_{L^1}\right\}}\left[\left(1+\|\nu\|^2_{L^2}\left(\|\nu\|^2_{L^2}+\|p\|^2_{L^2}+\|p'\|^2_{L^1}\right)\right)\left(\|gu_0\|^2_{W^1_{\mathcal{L}}}\right.\right.\nonumber\\
&+& \left.\|gu_1\|^2_{L^2}\Big)+\left(\|p\|^2_{L^\infty}+\|\nu\|^2_{L^\infty}\right)\left(\|gu_0\|^2_{L^2}+\|gu_1\|^2_{W^{-1}_{\mathcal{L}}}\right)\right].
\end{eqnarray}

Let us get next estimates by using that $\phi''_n(x)=p(x)\phi'_n(x)+(q(x)-\lambda_n)\phi_n(x)$, since $\phi_n$ is a normalised eigenfunction for $\mathcal{L}$ with eigenvalue $\lambda_n$. We have
\begin{eqnarray}\label{u_x2}
\left\|\partial_x^2u(t, \cdot)\right\|^2_{L^2}&=&\int\limits_0^1\left|\partial^2_xu(t,x)\right|^2dx\nonumber\\
&=&\int\limits_0^1\left|\sum\limits_{n=1}^\infty \left[A_n \cos \sqrt{\lambda_n}t+\frac{1}{\sqrt{\lambda_n}} B_n \sin\left(\sqrt{\lambda_n}t\right)\right]\phi''_n(x)\right|^2dx\nonumber\\
&\lesssim&\int\limits_0^1\sum\limits_{n=1}^\infty \left[\left|A_n\right|^2 +\left|\frac{B_n}{\sqrt{\lambda_n}}\right|^2 \right]\left|p(x)\phi'_n(x)+(q(x)-\lambda_n)\phi_n(x)\right|^2dx\nonumber\\
&\lesssim&\int\limits_0^1\sum\limits_{n=1}^\infty \left[\left|A_n\right|^2 + \left|\frac{B_n}{\sqrt{\lambda_n}}\right|^2 \right]\left|p(x)\phi'_n(x)\right|^2dx\nonumber\\
&+&\int\limits_0^1\sum\limits_{n=1}^\infty \left[\left|A_n\right|^2 +\left|\frac{B_n}{\sqrt{\lambda_n}}\right|^2 \right]|(q(x)-\lambda_n)\phi_n(x)|^2dx=J_1+J_2.
\end{eqnarray}
By using \eqref{phi'}-\eqref{u_x} we get
\begin{eqnarray}\label{u_xx1}
J_1&:=&\int\limits_0^1\sum\limits_{n=1}^\infty \left[\left|A_n\right|^2 +\frac{1}{\lambda_n} \left|B_n\right|^2 \right]\left|p(x)\phi'_n(x)\right|^2dx\nonumber\\
&=&\int\limits_0^1|p(x)|^2\sum\limits_{n=1}^\infty \left[\left|A_n\right|^2 +\frac{1}{\lambda_n} \left|B_n\right|^2 \right]\left|\phi'_n(x)\right|^2dx\nonumber\\
&\lesssim&\exp{\{\|p\|_{L^1}\}}\|p\|^2_{L^\infty}\left[\left(1+\|\nu\|^2_{L^2}\left(\|\nu\|^2_{L^2}+\|p\|^2_{L^2}+\|p'\|^2_{L^1}\right)\right)\left(\|gu_0\|^2_{W^1_{\mathcal{L}}}\right.\right.\nonumber\\
&+& \left.\|gu_1\|^2_{L^2}\Big)+\left(\|p\|^2_{L^\infty}+\|\nu\|^2_{L^\infty}\right)\left(\|gu_0\|^2_{L^2}+\|gu_1\|^2_{W^{-1}_{\mathcal{L}}}\right)\right].
\end{eqnarray}
Let us estimate the second term of \eqref{u_x2},
\begin{eqnarray}\label{u_xx}
J_2&:=&\int\limits_0^1\sum\limits_{n=1}^\infty \left[\left|A_n\right|^2 + \left|\frac{B_n}{\sqrt{\lambda_n}}\right|^2 \right]|(q(x)-\lambda_n)\phi_n(x)|^2dx\nonumber\\
&\lesssim&\exp{\{\|p\|_{L^1}\}}\sum\limits_{n=1}^\infty \left(|A_n|^2 +\left|\frac{B_n}{\sqrt{\lambda_n}}\right|^2\right)\int\limits_0^1|q(x)\psi_n(x)|^2dx+\nonumber\\
&+&\exp{\{\|p\|_{L^1}\}}\sum\limits_{n=1}^\infty \left(|\lambda_n A_n|^2 +\left|\sqrt{\lambda_n} B_n\right|^2\right)\int\limits_0^1|\psi_n(x)|^2dx\nonumber\\
&\leq&\exp{\{\|p\|_{L^1}\}}\left(\|q\|^2_{L^\infty}\sum\limits_{n=1}^\infty \left(|A_n|^2 +\left|\frac{B_n}{\sqrt{\lambda_n}}\right|^2\right)\right.\nonumber\\
&+&\left.\sum\limits_{n=1}^\infty \left|\lambda_n A_n\right|^2 +\sum\limits_{n=1}^\infty\left|\sqrt{\lambda_n}B_n\right|^2\right). 
\end{eqnarray}
Using the property of the operator $\mathcal{L}$ and the Parseval identity for the last expression in \eqref{u_xx}, we obtain
\begin{eqnarray*}
 \sum\limits_{n=1}^\infty\left|\sqrt{\lambda_n}B_n\right|^2&=& \sum\limits_{n=1}^\infty\left|\int\limits_0^1\sqrt{\lambda_n}g(x)u_1(x)\psi_n(x)dx\right|^2
 \leq\sum\limits_{n=1}^\infty\left|\int\limits_0^1\mathcal{L}^\frac{1}{2}\left(gu_1\right)\psi_n(x)dx\right|^2\\
 &=&\left\|\mathcal{L}^\frac{1}{2}\left(gu_1\right)\right\|^2_{L^2}=\|gu_1\|^2_{W^1_{\mathcal{L}}}.
\end{eqnarray*}
Taking into account the last expression and \eqref{u_xx1}, \eqref{u_xx} we obtain
\begin{eqnarray*}
\left\|\partial^2_xu(t,\cdot)\right\|^2_{L^2} 
&\lesssim &\exp{\{\|p\|_{L^1}\}}\|p\|^2_{L^\infty}\Big[\left(1+\|\nu\|^2_{L^2}\left(\|\nu\|^2_{L^2}+\|p\|^2_{L^2}+\|p'\|^2_{L^1}\right)\right)\times\nonumber\\
&\times& \left.\left(\|gu_0\|^2_{W^1_{\mathcal{L}}}+\|gu_1\|^2_{L^2}\right)+\left(\|p\|^2_{L^\infty}+\|\nu\|^2_{L^\infty}\right)\left(\|gu_0\|^2_{L^2}+\|gu_1\|^2_{W^{-1}_{\mathcal{L}}}\right)\right]\\
&+&\exp{\{\|p\|_{L^1}\}}\left(\|q\|^2_{L^\infty} \left(\|gu_0\|^2_{L^2}+\|gu_1\|^2_{W^{-1}_{\mathcal{L}}}\right)+\left\|gu_0\right\|^2_{W^2_{\mathcal{L}}}+\|gu_1\|^2_{W^1_{\mathcal{L}}}\right).
\end{eqnarray*}

Let us carry out the last estimate \eqref{est5} using that $\mathcal{L}^ku=\lambda_n^ku$ and Parseval's identity,
\begin{eqnarray*}
 \left\|u(t, \cdot)\right\|^2_{W^k_\mathcal{L}}&=&\left\|\mathcal{L}^\frac{k}{2}u(t, \cdot)\right\|^2_{L^2}=\int\limits_0^1\left|\mathcal{L}^\frac{k}{2}u(t,x)\right|^2dx=\int\limits_0^1\left|\lambda_n^\frac{k}{2}u(t,x)\right|^2dx\\
&=&\int\limits_0^1\left|\sum\limits_{n=1}^\infty \left[A_n \cos \sqrt{\lambda_n}t+\frac{1}{\sqrt{\lambda_n}} B_n \sin\left(\sqrt{\lambda_n}t\right)\right]\lambda_n^\frac{k}{2}\phi_n(x)\right|^2dx\\
&\lesssim&\exp{\left\{\|p\|_{L^1}\right\}}\sum\limits_{n=1}^\infty\left( \left| \lambda_n^\frac{k}{2}A_n\right|^2+ \left| \lambda_n^\frac{k-1}{2}B_n\right|^2\right)\\
&\leq&\exp{\left\{\|p\|_{L^1}\right\}}\left(\left\|\mathcal{L}^\frac{k}{2}\left(gu_0\right)\right\|^2_{L^2}+\left\|\mathcal{L}^\frac{k-1}{2}\left(gu_1\right)\right\|^2_{L^2}\right)\\
&=&\exp{\left\{\|p\|_{L^1}\right\}}\left(\left\|gu_0\right\|^2_{W^k_{\mathcal{L}}}+\left\|gu_1\right\|^2_{W^{k-1}_{\mathcal{L}}}\right).
\end{eqnarray*}
The proof of Theorem \ref{th1} is complete.
\end{proof}

We will now express all the estimates in terms of the coefficients, to be used in the very weak well-posedness in Section \ref{ch4}.

\begin{cor}\label{cor1}
Assume that $p'\in L^2(0,1)$, $q=\nu'$, $\nu \in L^\infty(0,1)$. If the initial data satisfy $(u_0,\, u_1) \in L^2(0,1)\times L^2(0,1)$ and $(u_0'', \, u''_1)\in L^2(0,1)\times L^2(0,1)$, then the wave equation \eqref{C.p1} with the initial/boundary problems  \eqref{C.p2}-\eqref{C.p3}  has unique solution $u\in C([0,T], L^2(0,1))$ which satisfies the estimates
\begin{equation}\label{ec1}
    \|u(t,\cdot)\|^2_{L^2}\lesssim \exp{\{2\|p\|_{L^1}\}}\left(\|u_0\|^2_{L^2}+\|u_1\|^2_{L^2}\right),
\end{equation}
\begin{eqnarray}\label{ec2}
\|\partial_t u(t,\cdot)\|^2_{L^2}&\lesssim& \exp{\{2\|p\|_{L^1}\}}\left( \|u''_0\|^2_{L^2}+\|p\|^2_{L^\infty}\|u'_0\|^2_{L^2}\right.\nonumber\\
&+&\left.\left(\|p\|^4_{L^\infty}+\|p'\|^2_{L^\infty}+\|q\|^2_{L^\infty}\right)\|u_0\|^2_{L^2}+\|u_1\|^2_{L^2}\right),
\end{eqnarray}
\begin{eqnarray}\label{ec3}
\|\partial_x u(t,\cdot)\|^2_{L^2}
&\lesssim& \exp{\left\{2\|p\|_{L^1}\right\}}\left\{\left(1+\|\nu\|^2_{L^2}\left(\|\nu\|^2_{L^2}+\|p\|^2_{L^2}+\|p'\|^2_{L^1}\right)\right)\times\right.\nonumber\\
&\times& \left(\|u''_0\|^2_{L^2}+\|p\|^2_{L^\infty}\|u'_0\|^2_{L^2}+\left(\|p\|^4_{L^\infty}+\|p'\|^2_{L^\infty}+\|q\|^2_{L^\infty}\right)\|u_0\|^2_{L^2}\right.\nonumber\\
&+&\left.\|u_1\|^2_{L^2}\right)+\left.\left(\|p\|^2_{L^\infty}+\|\nu\|^2_{L^\infty}\right)\left(\|u_0\|^2_{L^2}+\|u_1\|^2_{L^2}\right)\right\},
\end{eqnarray}
\begin{eqnarray}\label{ec4}
\left\|\partial_x^2u(t, \cdot)\right\|^2_{L^2}&\lesssim& \exp{\left\{2\|p\|_{L^1}\right\}}\left\{\left(1+\|\nu\|^2_{L^2}\left(\|\nu\|^2_{L^2}+\|p\|^2_{L^2}+\|p'\|^2_{L^1}\right)\right)\times\right.\nonumber\\
&\times& \left(\|u''_0\|^2_{L^2}+\|p\|^2_{L^\infty}\|u'_0\|^2_{L^2}+\left(\|p\|^4_{L^\infty}+\|p'\|^2_{L^\infty}+\|q\|^2_{L^\infty}\right)\|u_0\|^2_{L^2}\right.\nonumber\\
&+&\left.\|u_1\|^2_{L^2}\right)+\left(\|p\|^2_{L^\infty}+\|\nu\|^2_{L^\infty}\right)\left(\|u_0\|^2_{L^2}+\|u_1\|^2_{L^2}\right)\nonumber\\
&+&\|u''_0\|^2_{L^2}+\|u''_1\|^2_{L^2}+\|p\|^2_{L^\infty}\left(\|u'_0\|^2_{L^2}+\|u'_1\|^2_{L^2}\right)\nonumber\\
&+&\left.\left(\|p\|^4_{L^\infty}+\|p'\|^2_{L^\infty}+\|q\|^2_{L^\infty}\right)\left(\|u_0\|^2_{L^2}+\|u_1\|^2_{L^2}\right)\right\},
\end{eqnarray}
where the constants in these inequalities are independent of $u_0$, $u_1$, $p$ and $q$.
\end{cor}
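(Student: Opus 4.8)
The plan is to obtain Corollary~\ref{cor1} by specialising Theorem~\ref{th1} and then replacing every weighted norm $\|gu_i\|_{W^s_\mathcal{L}}$ ($i=0,1$, $s\in\{-1,0,1,2\}$) occurring on the right-hand sides of \eqref{est1}--\eqref{est4} by ordinary $L^2$-norms of $u_i,u_i',u_i''$. First I would note that the hypotheses $(u_0,u_1)\in L^2\times L^2$ and $(u_0'',u_1'')\in L^2\times L^2$, together with the Dirichlet conditions \eqref{C.p3} implicit in the set-up, give $u_0\in\mathrm{Dom}(\mathcal{L})=W^2_\mathcal{L}\hookrightarrow W^1_\mathcal{L}$ and $u_1\in W^2_\mathcal{L}\hookrightarrow W^0_\mathcal{L}=L^2$; hence Theorem~\ref{th1} applies with $k=0$ and yields a unique solution $u\in C([0,T],W^1_\mathcal{L})\cap C^1([0,T],L^2)\subset C([0,T],L^2)$, as well as the four estimates \eqref{est1}--\eqref{est4} (the right-hand side of \eqref{est4} being finite since also $u_0,u_1\in W^2_\mathcal{L}$). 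It remains to post-process those right-hand sides.

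To that end I would establish the following elementary bounds, with constants independent of $u_0,u_1,p,q$. (i) By \eqref{g-exp}, $\|gu_i\|_{L^2}^2\leq\|g\|_{L^\infty}^2\|u_i\|_{L^2}^2\leq\exp\{\|p\|_{L^1}\}\|u_i\|_{L^2}^2$. (ii) Since $\lambda_n=(\pi n)^2(1+o(n^{-1}))$, the eigenvalues are bounded below by a fixed positive constant, so $\|gu_i\|_{W^{-1}_\mathcal{L}}^2\lesssim\|gu_i\|_{L^2}^2\leq\exp\{\|p\|_{L^1}\}\|u_i\|_{L^2}^2$. (iii) Using $g'=-\tfrac12 pg$ and $g''=\big(\tfrac14 p^2-\tfrac12 p'\big)g$, one computes
$$\mathcal{L}(gu_i)=g\Big(-u_i''+2p\,u_i'+\big(q-\tfrac34 p^2+\tfrac12 p'\big)u_i\Big),$$
so that, taking $L^2$-norms, squaring, and inserting $\|g\|_{L^\infty}^2\leq\exp\{\|p\|_{L^1}\}$,
$$\|gu_i\|_{W^2_\mathcal{L}}^2\lesssim\exp\{\|p\|_{L^1}\}\Big(\|u_i''\|_{L^2}^2+\|p\|_{L^\infty}^2\|u_i'\|_{L^2}^2+\big(\|p\|_{L^\infty}^4+\|p'\|_{L^\infty}^2+\|q\|_{L^\infty}^2\big)\|u_i\|_{L^2}^2\Big),$$
which is exactly the coefficient combination appearing in \eqref{ec2}--\eqref{ec4}. (iv) The interpolation inequality for the scale $W^s_\mathcal{L}$, $\|f\|_{W^1_\mathcal{L}}^2\leq\|f\|_{W^2_\mathcal{L}}\,\|f\|_{L^2}\leq\tfrac12\|f\|_{W^2_\mathcal{L}}^2+\tfrac12\|f\|_{L^2}^2$, reduces the $s=1$ norm of $gu_i$ to the cases (i) and (iii).

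Substituting (i)--(iv) into \eqref{est1}--\eqref{est4}, absorbing the extra factor $\exp\{\|p\|_{L^1}\}$ into the prefactor to form $\exp\{2\|p\|_{L^1}\}$, and noting that the leftover $\|gu_i\|_{L^2}^2$-type remainders are dominated by terms already present, yields \eqref{ec1}--\eqref{ec4} and completes the proof. The one step requiring genuine care is (iii): the product $gu_i$ must be differentiated correctly -- this is where the regularity $g,g'\in L^\infty$ and $g''\in L^2$, a consequence of $p'\in L^2(0,1)$, is used -- and one must keep precise track of the arising powers of $\|p\|_{L^\infty}$, $\|p'\|_{L^\infty}$ and $\|q\|_{L^\infty}$ so that they coincide with those in \eqref{ec2}--\eqref{ec4}. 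One should also check that $gu_i\in\mathrm{Dom}(\mathcal{L})$, which (since $g(0),g(1)\neq0$) is where the Dirichlet boundary conditions on $u_i$ enter.
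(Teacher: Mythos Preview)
Your proposal is correct and follows essentially the same strategy as the paper: both rely on the eigenvalue lower bound $\lambda_n\gtrsim 1$ (to pass between $W^s_{\mathcal{L}}$ levels) and on expanding $(gu_i)'$, $(gu_i)''$ via $g'=-\tfrac12 pg$, $g''=(\tfrac14 p^2-\tfrac12 p')g$. The only difference is organisational---the paper re-enters the Fourier-coefficient computations from the proof of Theorem~\ref{th1} and uses the crude bound $\sum_n|\sqrt{\lambda_n}A_n|^2\leq\sum_n|\lambda_n A_n|^2$, whereas you quote \eqref{est1}--\eqref{est4} as black boxes and post-process the norms $\|gu_i\|_{W^s_{\mathcal{L}}}$, handling $s=1$ by interpolation instead.
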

\begin{proof}
By using inequality \eqref{25} we obtain
\begin{eqnarray}\label{26}
\|u(t, \cdot)\|^2_{L^2}&\lesssim& \sum\limits_{n=1}^\infty\left(\int\limits_0^1|A_n|^2|\phi_n(x)|^2dx+\int\limits_0^1\left|\frac{B_n}{\sqrt{\lambda_n}}\right|^2|\phi_n(x)|^2dx\right).
\end{eqnarray}
In Theorem \ref{th1} we obtained estimates with respect to the operator $\mathcal{L}$, but here we want to obtain estimates with respect to the initial data $(u_0,\, u_1)$ and functions $p$ and $q$. Therefore, since $\lambda_n\geq 1$ we can use the next estimate
\begin{equation}\label{B-lam}
    \int\limits_0^1\left|\frac{B_n}{\sqrt{\lambda_n}}\right|^2|\phi_n(x)|^2dx\leq \int\limits_0^1|B_n|^2|\phi_n(x)|^2dx.
\end{equation}
Thus, using \eqref{An} and the Parseval identity  in \eqref{26}, taking into account the last relation, we obtain
$$\|u(t, \cdot)\|^2_{L^2}\lesssim \exp{\{\|p\|_{L^1}\}}\left(\sum\limits_{n=1}^\infty\left(|A_n|^2+|B_n|^2\right) \right)\leq  \exp{\{2\|p\|_{L^1}\}}\left(\|u_0\|^2_{L^2}+\|u_1\|^2_{L^2}\right).$$
By \eqref{t26} we have
\begin{eqnarray*}
\|\partial_t u(t,\cdot)\|^2 &\lesssim& \exp{\{\|p\|_{L^1}\}}\left(\sum\limits_{n=1}^\infty|\sqrt{\lambda_n} A_n |^2+\sum\limits_{n=1}^\infty|B_n|^2\right).
\end{eqnarray*}
Since $\lambda_n$ are eigenvalues of the operator $\mathcal{L}$, we obtain 
\begin{eqnarray*}\label{21}\sum\limits_{n=1}^\infty|\sqrt{\lambda_n}A_n|^2&\lesssim&\sum\limits_{n=1}^\infty\left| \int\limits_0^1 \lambda_n gu_0(x)\psi_n(x)dx\right|^2\nonumber\\
&=&\sum\limits_{n=1}^\infty\left| \int\limits_0^1 \left(-(gu_0)''(x)+p(x)(gu_0)'(x)+q(x)(gu_0)(x)\right)\psi_n(x)dx\right|^2\\
&\lesssim& \sum\limits_{n=1}^\infty\left| \int\limits_0^1 (gu_0)''(x)\psi_n(x)dx\right|^2+\sum\limits_{n=1}^\infty\left| \int\limits_0^1p(x)(gu_0)'(x)\psi_n(x)dx\right|^2\\
&+&\sum\limits_{n=1}^\infty\left|\int\limits_0^1q(x)(gu_0)(x)\psi_n(x)dx\right|^2.
      \end{eqnarray*}
Since $p,\,q\in L^\infty(0,1)$ and by Parseval's identity, we get
\begin{eqnarray}\label{LA}
\sum\limits_{n=1}^\infty|\sqrt{\lambda_n}A_n|^2&\lesssim&\sum\limits_{n=1}^\infty |\langle (gu_0)'',\psi_n\rangle|^2+\sum\limits_{n=1}^\infty |\langle p(gu_0)',\psi_n\rangle|^2+\sum\limits_{n=1}^\infty |\langle q(gu_0),\psi_n\rangle|^2\nonumber\\
&=&\|(gu_0)''\|^2_{L^2}+\|p(gu_0)'\|^2_{L^2}+\|q(gu_0)\|^2_{L^2}\nonumber\\
&\leq&\|(gu_0)''\|^2_{L^2}+\|p\|^2_{L^\infty}\|(gu_0)'\|^2_{L^2}+\|q\|^2_{L^\infty}\|q(gu_0)\|^2_{L^2},
\end{eqnarray}
thus, 
$$\|\partial_t u(t,\cdot)\|^2_{L^2}\lesssim \exp{\{\|p\|_{L^1}\}}\left( \|(gu_0)''\|^2_{L^2}+\|p\|^2_{L^\infty}\|(gu_0)'\|^2_{L^2}+\|q\|^2_{L^\infty}\|gu_0\|^2_{L^2}+\|gu_1\|^2_{L^2}\right).$$

To obtain the results of Section \ref{ch4}, we need estimates in terms of $p$, $q$, and $(u_0,\,u_1)$. Therefore, we proceed to the next estimates. We have
\begin{eqnarray*}
\|(gu_0)'\|^2&\lesssim& \|g'u_0\|^2_{L^2}+\|gu'_0\|^2_{L^2}\leq \|g'\|^2_{L^\infty}\|u_0\|^2_{L^2}+\|g\|^2_{L^\infty}\|u'_0\|^2_{L^2},
\end{eqnarray*}
where
$$g'(x)=-\frac{1}{2}p(x)\exp{\left\{-\frac{1}{2}\int\limits_0^xp(\xi)d\xi\right\}}=-\frac{1}{2}p(x)g(x),$$
and according to \eqref{g-exp} we obtain
\begin{eqnarray}\label{gu'}
\|(gu_0)'\|^2&\lesssim&\|pg\|^2_{L^\infty}\|u_0\|^2_{L^2}+\|g\|^2_{L^\infty}\|u'_0\|^2_{L^2}\leq \|g\|^2_{L^\infty}\left(\|p\|^2_{L^\infty}\|u_0\|^2_{L^2}+\|u'_0\|^2_{L^2}\right)\nonumber\\
&\leq&\exp{\left\{\|p\|_{L^1}\right\}}\left(\|p\|^2_{L^\infty}\|u_0\|^2_{L^2}+\|u'_0\|^2_{L^2}\right).
\end{eqnarray}
For $(gu_0)''$ one can obtain
\begin{eqnarray}\label{gu''}
\|(gu_0)''\|^2_{L^2}&\lesssim& \|g''u_0\|^2_{L^2}+\|g'u'_0\|^2_{L^2}+\|gu''_0\|^2_{L^2}\leq \|(p^2+p')g\|^2_{L^\infty}\|u_0\|^2_{L^2}\nonumber\\
&+&\|pg\|^2_{L^\infty}\|u_0'\|^2_{L^2}+\|g\|^2_{L^\infty}\|u''_0\|^2_{L^2}\lesssim\exp{\left\{\|p\|_{L^1}\right\}}\times\nonumber\\
&\times&\left(\left(\|p\|^4_{L^\infty}+\|p'\|^2_{L^\infty}\right)\|u_0\|^2_{L^2}+\|p\|^2_{L^\infty}\|u'_0\|^2_{L^2}+\|u''_0\|^2_{L^2}\right).
\end{eqnarray}
Given estimates \eqref{gu'}, \eqref{gu''} and \eqref{g-exp}, for $\|\partial_tu(t,\cdot)\|_{L^2}$ we get
\begin{eqnarray*}
\|\partial_t u(t,\cdot)\|^2_{L^2}&\lesssim&\exp{\{2\|p\|_{L^1}\}}\left( \|u''_0\|^2_{L^2}+\|p\|^2_{L^\infty}\|u'_0\|^2_{L^2}\right.\\
&+&\left.\left(\|p\|^4_{L^\infty}+\|p'\|^2_{L^\infty}+\|q\|^2_{L^\infty}\right)\|u_0\|^2_{L^2}+\|u_1\|^2_{L^2}\right).
\end{eqnarray*}

Taking \eqref{phi'}, \eqref{u_x}, \eqref{B-lam} and \eqref{LA} into account, we make the following estimates
\begin{eqnarray*}
\|\partial_x u(t,\cdot)\|^2_{L^2}&=&\int\limits_0^1|\partial_xu(t,x)|^2dt\\
&=&
\int\limits_0^1\left|\sum\limits_{n=1}^\infty\left[A_n\cos\left(\sqrt{\lambda_n}t\right)+\frac{1}{\sqrt{\lambda_n}}B_n\sin\left(\sqrt{\lambda_n}t\right)\right]\phi'_n(x)\right|^2dx\\
&\lesssim&\exp{\{\|p\|_{L^1}\}}\left(\sum\limits_{n=1}^\infty\left|\sqrt{\lambda_n}A_n \right|^2+\sum\limits_{n=1}^\infty|B_n|^2\right)\int\limits_0^1|r_n(x)|^2dx\\
&+&\exp{\{\|p\|_{L^1}\}}\left(\sum\limits_{n=1}^\infty|A_n|^2+\sum\limits_{n=1}^\infty\left|\frac{1}{\sqrt{\lambda_n}}B_n\right|^2\right)\times\\
&\times&\int\limits_0^1\left(|p(x)|^2+|\nu(x)|^2\right)|\psi_n(x)|^2dx\\
&\lesssim& \exp{\left\{\|p\|_{L^1}\right\}}\left\{\left(1+\|\nu\|^2_{L^2}\left(\|\nu\|^2_{L^2}+\|p\|^2_{L^2}+\|p'\|^2_{L^1}\right)\right)\times\right.\nonumber\\
&\times& \left(\|(gu_0)''\|^2_{L^2}+\|p\|^2_{L^\infty}\|(gu_0)'\|^2_{L^2}+\|q\|^2_{L^\infty}\|gu_0\|^2_{L^\infty}+\|gu_1\|^2_{L^2}\right)\\
&+&\left.\left(\|p\|^2_{L^\infty}+\|\nu\|^2_{L^\infty}\right)\left(\|gu_0\|^2_{L^2}+\|gu_1\|^2_{L^2}\right)\right\}.
\end{eqnarray*}
According to \eqref{gu'}, \eqref{gu''} and \eqref{g-exp} we get
\begin{eqnarray*}
\|\partial_x u(t,\cdot)\|^2_{L^2}&\lesssim&\exp{\left\{2\|p\|_{L^1}\right\}}\left\{\left(1+\|\nu\|^2_{L^2}\left(\|\nu\|^2_{L^2}+\|p\|^2_{L^2}+\|p'\|^2_{L^1}\right)\right)\times\right.\nonumber\\
&\times& \left(\|u''_0\|^2_{L^2}+\|p\|^2_{L^\infty}\|u'_0\|^2_{L^2}+\left(\|p\|^4_{L^\infty}+\|p'\|^2_{L^\infty}+\|q\|^2_{L^\infty}\right)\|u_0\|^2_{L^2}\right.\\
&+&\left.\|u_1\|^2_{L^2}\right)+\left.\left(\|p\|^2_{L^\infty}+\|\nu\|^2_{L^\infty}\right)\left(\|u_0\|^2_{L^2}+\|u_1\|^2_{L^2}\right)\right\}.
\end{eqnarray*}

Let us get an estimate for
\begin{eqnarray*}
\left\|\partial_x^2u(t, \cdot)\right\|^2_{L^2}&=&\int\limits_0^1\left|\partial^2_xu(t,x)\right|^2dx\\
&=&\int\limits_0^1\left|\sum\limits_{n=1}^\infty \left[A_n \cos \sqrt{\lambda_n}t+\frac{1}{\sqrt{\lambda_n}} B_n \sin\left(\sqrt{\lambda_n}t\right)\right]\phi''_n(x)\right|^2dx\\
&\lesssim& \int\limits_0^1\sum\limits_{n=1}^\infty \left(|A_n|^2|\phi''_n(x)|^2 +\left|\frac{B_n}{\sqrt{\lambda_n}}\right|^2|\phi''_n(x)|^2\right)dx\\
&\leq&\int\limits_0^1\sum\limits_{n=1}^\infty |A_n|^2|p(x)\phi'_n(x)+(q(x)-\lambda_n)\phi_n(x)|^2dx\\
&+&\int\limits_0^1\sum\limits_{n=1}^\infty \left|\frac{B_n}{\sqrt{\lambda_n}}\right|^2|p(x)\phi'_n(x)+(q(x)-\lambda_n)\phi_n(x)|^2dx=M_1+M_2. 
\end{eqnarray*}
We have
\begin{eqnarray*}
M_1&:=&\int\limits_0^1\sum\limits_{n=1}^\infty |A_n|^2|p(x)\phi'_n(x)+(q(x)-\lambda_n)\phi_n(x)|^2dx\\
&\lesssim&\int\limits_0^1|p(x)|^2\left(\sum\limits_{n=1}^\infty|A_n|^2|\phi'_n(x)|^2\right)dx+\int\limits_0^1|q(x)|^2\left(\sum\limits_{n=1}^\infty|A_n|^2|\phi_n(x)|^2\right)dx\\
 &+&\int\limits_0^1\sum\limits_{n=1}^\infty|\lambda_nA_n\phi_n(x)|^2dx,
\end{eqnarray*}
carrying out estimates as in \eqref{u_xx1} and \eqref{LA}, we obtain
\begin{eqnarray*}
M_1&\lesssim& \exp{\{\|p\|_{L^1}\}}\left(\|p\|^2_{L^\infty}\left(\left(1+\|\nu\|^2_{L^2}\left(\|\nu\|^2_{L^2}+\|p\|^2_{L^2}+\|p'\|^2_{L^1}\right)\right)\left( \|(gu_0)''\|^2_{L^2}\right.\right.\right.\\
&+&\left.\left.\|p\|^2_{L^\infty}\|(gu_0)'\|^2_{L^2}+\|q\|^2_{L^\infty}\|gu_0\|^2_{L^2}\right)+\left(\|p\|^2_{L^\infty}+\|\nu\|^2_{L^\infty}\right)\|gu_0\|^2_{L^2}\right)\\
&+&\left.\|q\|^2_{L^\infty}\|gu_0\|^2_{L^2}+\|(gu_0)''\|^2_{L^2}+\|p\|^2_{L^\infty}\|(gu_0)'\|^2_{L^2}+\|q\|^2_{L^\infty}\|gu_0\|^2_{L^2}\right).
\end{eqnarray*}
Similarly, we obtain the following estimate
\begin{eqnarray*}
M_2&:=&\int\limits_0^1\sum\limits_{n=1}^\infty \left|\frac{B_n}{\sqrt{\lambda_n}}\right|^2|p(x)\phi'_n(x)+(q(x)-\lambda_n)\phi_n(x)|^2dx\\
&\lesssim& \exp{\{\|p\|_{L^1}\}}\left(\|p\|^2_{L^\infty}\left(
\left(1+\|\nu\|^2_{L^2}\left(\|\nu\|^2_{L^2}+\|p\|^2_{L^2}+\|p'\|^2_{L^1}\right)\right)\|gu_1\|^2_{L^2}\right.\right.\\
&+&\left.\left(\|p\|^2_{L^\infty}+\|\nu\|^2_{L^\infty}\right)\|gu_1\|^2_{L^2}\right)+\|q\|^2_{L^\infty}\|gu_1\|^2_{L^2}+\|(gu_1)''\|^2_{L^2}\\
&+&\|p\|^2_{L^\infty}\|(gu_1)'\|^2_{L^2}+\left.\|q\|^2_{L^\infty}\|gu_1\|^2_{L^2}\right).
\end{eqnarray*}
Using  \eqref{gu'}, \eqref{gu''} and \eqref{g-exp}, we have
\begin{eqnarray*}
\left\|\partial_x^2u(t, \cdot)\right\|^2_{L^2}&\lesssim& 
\exp{\left\{2\|p\|_{L^1}\right\}}\left\{\left(1+\|\nu\|^2_{L^2}\left(\|\nu\|^2_{L^2}+\|p\|^2_{L^2}+\|p'\|^2_{L^1}\right)\right)\times\right.\nonumber\\
&\times& \left(\|u''_0\|^2_{L^2}+\|p\|^2_{L^\infty}\|u'_0\|^2_{L^2}+\left(\|p\|^4_{L^\infty}+\|p'\|^2_{L^\infty}+\|q\|^2_{L^\infty}\right)\|u_0\|^2_{L^2}\right.\\
&+&\left.\|u_1\|^2_{L^2}\right)+\left(\|p\|^2_{L^\infty}+\|\nu\|^2_{L^\infty}\right)\left(\|u_0\|^2_{L^2}+\|u_1\|^2_{L^2}\right)\\
&+&\|u''_0\|^2_{L^2}+\|u''_1\|^2_{L^2}+\|p\|^2_{L^\infty}\left(\|u'_0\|^2_{L^2}+\|u'_1\|^2_{L^2}\right)\\
&+&\left.\left(\|p\|^4_{L^\infty}+\|p'\|^2_{L^\infty}+\|q\|^2_{L^\infty}\right)\left(\|u_0\|^2_{L^2}+\|u_1\|^2_{L^2}\right)\right\}.
\end{eqnarray*}

The proof of Corollary \ref{cor1} is complete.
\end{proof}

\section{Non-homogeneous equation case}

In this section, we are going to give brief ideas for how to deal with the non-homogeneous wave equation with initial/boundary conditions
\begin{equation}\label{nonh}
    \left\{\begin{array}{l}
    \partial^2_t u(t,x)+\mathcal{L} u(t,x)=f(t,x),\qquad (t,x)\in [0,T]\times (0,1),\\
    u(0,x)=u_0(x),\quad x\in (0,1),\\
    \partial_tu(0,x)=u_1(x),\quad x\in(0,1),\\
    u(t,0)=0=u(t,1),\quad t\in[0,T],
    \end{array}\right.
\end{equation}
where operator $\mathcal{L}$ is defined by 
$$\mathcal{L}=-\frac{\partial^2}{\partial x^2}+p(x)\frac{\partial}{\partial x}+q(x),\qquad x\in(0,1).$$

\begin{thm}\label{non-hom}
Assume that $p'\in L^2(0,1)$, $q=\nu'$, $\nu \in L^\infty(0,1)$ and $f=f(t,x)\in C^1([0,T],L^2(0,1))$. For any $k\in \mathbb{R}$ if the initial data satisfy $(u_0,\, u_1) \in W^{1+k}_\mathcal{L}\times W^k_\mathcal{L}$ then the non-homogeneous wave equation with initial/boundary conditions \eqref{nonh} has unique solution $u\in C([0,T], W^{1+k}_\mathcal{L})\cap C^1([0,T], W^{k}_\mathcal{L})$ which satisfies the estimates
\begin{equation}\label{es-nh1}
        \|u(t,\cdot)\|^2_{L^2}\lesssim \exp{\left\{\|p\|_{L^1}\right\}}\left(\|gu_0\|^2_{L^2}+\|gu_1\|^2_{W^{-1}_{\mathcal{L}}}+2T^2\|g\|^2_{L^\infty}\|f\|^2_{C([0,T],L^2(0,1))}\right),
\end{equation}
\begin{equation}\label{es-nh2}
\|\partial_tu(t,\cdot)\|^2_{L^2}\lesssim \exp{\left\{\|p\|_{L^1}\right\}}\left(\|gu_0\|^2_{W^1_{\mathcal{L}}}+\|gu_1\|^2_{L^2}+2T^2\|g\|^2_{L^\infty}\|f\|^2_{C([0,T],L^2(0,1))}\right),
\end{equation}
\begin{eqnarray}\label{es-nh3}
\|\partial_xu(t,\cdot)\|^2_{L^2}&\lesssim& \exp{\left\{\|p\|_{L^1}\right\}}\Big\{\left(1+\|\nu\|^2_{L^2}\left(\|\nu\|^2_{L^2}+\|p\|^2_{L^2}+\|p'\|^2_{L^1}\right)\right)\times\nonumber\\
&\times& \left(\|gu_0\|^2_{W^1_{\mathcal{L}}}+\|gu_1\|^2_{L^2}\right)+\left(\|p\|^2_{L^\infty}+\|\nu\|^2_{L^\infty}\right)\left(\|gu_0\|^2_{L^2}+\|gu_1\|^2_{W^{-1}_{\mathcal{L}}}\right)\nonumber\\
&+&\left.\left(1+\|\nu\|^2_{L^2}\left(\|\nu\|^2_{L^2}+\|p\|^2_{L^2}+\|p'\|^2_{L^1}\right)+\|p\|^2_{L^\infty}+\|\nu\|^2_{L^\infty}\right)\times\right.\nonumber\\
&\times&\left.2T^2\|g\|^2_{L^\infty}\|f\|^2_{C([0,T],L^2(0,1))}\right\},
\end{eqnarray}
\begin{eqnarray}\label{es-nh4}
\|\partial^2_xu(t,\cdot)\|^2_{L^2}&\lesssim& 
\exp{\{\|p\|_{L^2}\}}\|p\|^2_{L^\infty}\Big\{\left(1+\|\nu\|^2_{L^2}\left(\|\nu\|^2_{L^2}+\|p\|^2_{L^2}+\|p'\|^2_{L^1}\right)\right)\times\nonumber\\
&\times& \left.\left(\|gu_0\|^2_{W^1_{\mathcal{L}}}+\|gu_1\|^2_{L^2}\right)+\left(\|p\|^2_{L^\infty}+\|\nu\|^2_{L^\infty}\right)\left(\|gu_0\|^2_{L^2}+\|gu_1\|^2_{W^{-1}_{\mathcal{L}}}\right)\right.\nonumber\\
&+&\|q\|^2_{L^\infty} \left(\|gu_0\|^2_{L^2}+\|gu_1\|^2_{W^{-1}_{\mathcal{L}}}\right)+\left\|gu_0\right\|^2_{W^2_{\mathcal{L}}}+\|gu_1\|^2_{W^1_{\mathcal{L}}}\nonumber\\
&+&\left(\|p\|^2_{L^\infty}\left(1+\|\nu\|^2_{L^2}\left(\|\nu\|^2_{L^2}+\|p\|^2_{L^2}+\|p'\|^2_{L^1}\right)+\|p\|^2_{L^\infty}+\|\nu\|^2_{L^\infty}\right)\right.\times\nonumber\\
&+&\left.\left.\|q\|^2_{L^\infty}\right)\|g\|^2_{L^\infty}\left(2T^2\|f\|^2_{C([0,T],L^2(0,1))}+T^2\|f\|^2_{C^1([0,T],L^2(0,1))}\right)\right\},
\end{eqnarray}
\begin{eqnarray}\label{es-nh5}
    \|u(t,\cdot)\|^2_{W^k_\mathcal{L}} &\lesssim& \exp{\left\{\|p\|_{L^1}\right\}}\left(\left\|gu_0\right\|^2_{W^k_{\mathcal{L}}}+\left\|gu_1\right\|^2_{W^{k-1}_{\mathcal{L}}}\right.\nonumber\\
    &+&\left.2T^2\left\|gf(\cdot,\cdot)\right\|^2_{C([0,T],W^{k-1}_\mathcal{L}(0,1))}\right),
\end{eqnarray}
where the constants in these inequalities are independent of $u_0$, $u_1$, $p$, $q$ and $f$.
\end{thm}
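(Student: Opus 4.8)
\emph{Approach.} The plan is to solve \eqref{nonh} by Duhamel's principle on top of the spectral decomposition already used in Theorem~\ref{th1}. Writing $u(t,x)=\sum_{n=1}^\infty v_n(t)\phi_n(x)$ and expanding the source as $f(t,x)=\sum_{n=1}^\infty f_n(t)\phi_n(x)$ with $f_n(t):=\langle gf(t,\cdot),\psi_n\rangle$ (recall $\psi_n=g\phi_n$ is orthonormal in $L^2(0,1)$, while $\mathcal{L}\phi_n=\lambda_n\phi_n$), one finds that each coefficient must solve the forced oscillator equation $v_n''(t)+\lambda_n v_n(t)=f_n(t)$ with $v_n(0)=A_n$, $v_n'(0)=B_n$, where $A_n,B_n$ are exactly as in the proof of Theorem~\ref{th1}. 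Hence
$$v_n(t)=A_n\cos(\sqrt{\lambda_n}t)+\tfrac{1}{\sqrt{\lambda_n}}B_n\sin(\sqrt{\lambda_n}t)+\tfrac{1}{\sqrt{\lambda_n}}\int_0^t\sin\!\big(\sqrt{\lambda_n}(t-s)\big)f_n(s)\,ds,$$
so that $u=u_{\mathrm{hom}}+w$, where $u_{\mathrm{hom}}$ is precisely the homogeneous solution \eqref{23} (with data $(u_0,u_1)$, already controlled by Theorem~\ref{th1}) and $w(t,x)=\sum_n v_n^{\mathrm{D}}(t)\phi_n(x)$ with $v_n^{\mathrm{D}}(t):=\tfrac{1}{\sqrt{\lambda_n}}\int_0^t\sin(\sqrt{\lambda_n}(t-s))f_n(s)\,ds$ is the Duhamel term. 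Each of \eqref{es-nh1}--\eqref{es-nh5} will then follow by adding to the Theorem~\ref{th1} bound for $u_{\mathrm{hom}}$ the corresponding bound for $w$.

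\emph{Low-order estimates.} For \eqref{es-nh1}, \eqref{es-nh2}, \eqref{es-nh3} and \eqref{es-nh5} it suffices to estimate $v_n^{\mathrm{D}}$ crudely: since $|\sin|,|\cos|\le1$ and $\lambda_n\ge1$, both $v_n^{\mathrm{D}}(t)$ and $\partial_tv_n^{\mathrm{D}}(t)=\int_0^t\cos(\sqrt{\lambda_n}(t-s))f_n(s)\,ds$, as well as $\sqrt{\lambda_n}\,v_n^{\mathrm{D}}(t)$, are bounded by $\int_0^t|f_n(s)|\,ds$. Applying Cauchy--Schwarz in $s$ and then Tonelli to exchange $\sum_n$ and $\int_0^t$ gives $\sum_n\big(\int_0^t|f_n|\big)^2\le T\int_0^T\sum_n|f_n(s)|^2\,ds=T\int_0^T\|gf(s,\cdot)\|_{L^2}^2\,ds\le T^2\|g\|_{L^\infty}^2\|f\|_{C([0,T],L^2)}^2$. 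Combining this with \eqref{norm-phi}, the bound \eqref{39} on $\|r_n\|_{L^2}$, estimate \eqref{40}, and the formula \eqref{phi'} for $\phi_n'$, exactly as in Theorem~\ref{th1}, produces \eqref{es-nh1}--\eqref{es-nh3}; for \eqref{es-nh5} one keeps the weight $\lambda_n^{(k-1)/2}$ inside and uses $\sum_n\lambda_n^{k-1}|f_n(s)|^2=\|gf(s,\cdot)\|_{W^{k-1}_\mathcal{L}}^2$.

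\emph{Main obstacle: the estimate \eqref{es-nh4}.} Using $\phi_n''=p\phi_n'+(q-\lambda_n)\phi_n$ as in Theorem~\ref{th1}, the dangerous contribution to $\|\partial_x^2w\|_{L^2}^2$ is $\sum_n\lambda_n^2|v_n^{\mathrm{D}}(t)|^2\|\phi_n\|_{L^2}^2=\sum_n\lambda_n\big(\int_0^t\sin(\sqrt{\lambda_n}(t-s))f_n(s)\,ds\big)^2\|\phi_n\|_{L^2}^2$, which carries a surplus factor $\lambda_n$ that the crude bound cannot absorb. The remedy is to integrate by parts in $s$:
$$\sqrt{\lambda_n}\int_0^t\sin\!\big(\sqrt{\lambda_n}(t-s)\big)f_n(s)\,ds=f_n(t)-\cos(\sqrt{\lambda_n}t)f_n(0)-\int_0^t\cos\!\big(\sqrt{\lambda_n}(t-s)\big)f_n'(s)\,ds,$$
which is exactly where the hypothesis $f\in C^1([0,T],L^2)$ enters, via $f_n'(s)=\langle g\partial_sf(s,\cdot),\psi_n\rangle$. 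Thus $|\lambda_n v_n^{\mathrm{D}}(t)|\le|f_n(t)|+|f_n(0)|+\int_0^t|f_n'(s)|\,ds$, and summing in $n$ with Parseval and the same Tonelli step gives $\sum_n|\lambda_n v_n^{\mathrm{D}}(t)|^2\lesssim\|g\|_{L^\infty}^2\big(\|f\|_{C([0,T],L^2)}^2+T^2\|\partial_tf\|_{C([0,T],L^2)}^2\big)$. Feeding this in for the $\lambda_n\phi_n$ part, the already-established bound for $\partial_xw$ (multiplied by $\|p\|_{L^\infty}^2$) for the $p\phi_n'$ part, and the crude bound (times $\|q\|_{L^\infty}^2$) for the $q\phi_n$ part, into the splitting of $\|\partial_x^2w\|_{L^2}^2$ yields \eqref{es-nh4}. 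Finally, uniqueness and the regularity $u\in C([0,T],W^{1+k}_\mathcal{L})\cap C^1([0,T],W^k_\mathcal{L})$ follow from linearity together with the convergence of the eigenfunction series guaranteed by the estimates above (Weierstrass $M$-test in the $W^k_\mathcal{L}$-norms), just as in the homogeneous case.
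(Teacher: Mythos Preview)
Your approach is essentially the same as the paper's: both expand in the eigenfunctions, reduce to the forced oscillator $v_n''+\lambda_n v_n=(gf)_n$ (your Duhamel kernel $\sin(\sqrt{\lambda_n}(t-s))$ is just the trigonometric recombination of the paper's two variation-of-constants integrals), and then estimate the homogeneous and particular parts separately with the same bounds \eqref{norm-phi}, \eqref{39}, \eqref{40}. Your integration-by-parts step for \eqref{es-nh4}, trading the surplus $\sqrt{\lambda_n}$ for $f_n'(s)$, is in fact more explicit than the paper's treatment of the corresponding terms $E_2,E_3$, where the appearance of $\|f\|_{C^1([0,T],L^2)}$ is asserted without the computation being written out; your argument fills that gap correctly.
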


\begin{proof}
Substitution 
\begin{equation}\label{u-v}
  u(t,x)=\exp{\left\{\frac{1}{2}\int\limits_0^xp(\xi)d\xi\right\}}v(t,x)  
\end{equation}
brings equation \eqref{nonh} to the form
\begin{equation}\label{nonh-s}
    \left\{\begin{array}{l}
    \partial^2_t v(t,x)-\partial^2_x v(t,x)+\left(\frac{p^2(x)}{4}-\frac{p'(x)}{2}+q(x)\right)v(t,x)=g(x)f(t,x),\\
    \qquad\qquad\qquad\qquad\qquad\qquad\qquad\qquad\qquad\qquad (t,x)\in [0,T]\times (0,1),\\
    v(0,x)=g(x)u_0(x),\quad x\in (0,1),\\
    \partial_tv(0,x)=g(x)u_1(x),\quad x\in(0,1),\\
    v(t,0)=0=v(t,1),\quad t\in[0,T].
    \end{array}\right.
\end{equation}

We can use the eigenfunctions \eqref{norm} of the corresponding (homogeneous) eigenvalue problem \eqref{eq-z}-\eqref{bou-z}, and look for a solution in the series form
\begin{equation}\label{nonhu}
    v(t,x)=\sum\limits_{n=1}^\infty v_n(t)\psi_n(x),
\end{equation}
where
$$v_n(t) =\int\limits_0^1 v(t,x)\psi_n(x)dx.$$
We can similarly expand the source function,
\begin{equation}\label{func}
    g(x)f(t,x)=\sum\limits_{n=1}^\infty (gf)_n(t)\psi_n(x),\qquad (gf)_n(t)=\int\limits_0^1g(x)f(t,x)\psi_n(x)dx.
\end{equation}

Now, since we are looking for a twice differentiable function $v(t,x)$ that satisfies the homogeneous
Dirichlet boundary conditions, we can differentiate the Fourier series \eqref{nonhu} term by term and using that the $\psi_n(x)$ satisfies the equation \eqref{eq-z} to obtain
\begin{equation}\label{uxx}
    v_{xx}(t,x) = \sum\limits_{n=1}^\infty v_n(t)\psi''_n(x)=\sum\limits_{n=1}^\infty v_n(t)\left(\frac{p^2(x)}{4}-\frac{p'(x)}{2}+q(x)-\lambda_n\right)\psi_n(x).
\end{equation}

We can also twice differentiate the series \eqref{nonhu} with respect to $t$ to obtain
\begin{equation}\label{utt}
    v_{tt}(t,x) = \sum\limits_{n=1}^\infty v''_n(t)\psi_n(x),
\end{equation}
since the Fourier coefficients of $v_{tt}(t,x)$ are
$$\int\limits_0^1v_{tt}(t,x)\psi_n(x)dx=\frac{\partial^2}{\partial t^2}\left[\int\limits_0^1v(t,x)\psi_n(x)dx\right]=v''_n(t).$$
Differentiation under the above integral is allowed since the resulting integrand is continuous.

Substituting \eqref{utt} and \eqref{uxx} into the equation, and using \eqref{func}, we have
\begin{eqnarray*}
\sum\limits_{n=1}^\infty v''_n(t)\psi_n(x)&-&\sum\limits_{n=1}^\infty v_n(t)\left(\frac{p^2(x)}{4}-\frac{p'(x)}{2}+q(x)-\lambda_n\right)\psi_n(x)\\
&+&\left(\frac{p^2(x)}{4}-\frac{p'(x)}{2}+q(x)\right)\sum\limits_{n=1}^\infty v_n(t)\psi_n(x)=\sum\limits_{n=1}^\infty (gf)_n(t)\psi_n(x),
\end{eqnarray*}
and after a slight rearrangement, we get
$$\sum\limits_{n=1}^\infty \left[v''_n(t)+\lambda_nv_n(t)\right]\psi_n(x)=\sum\limits_{n=1}^\infty (gf)_n(t)\psi_n(x).$$
But then, due to the completeness,
$$v''_n(t)+\lambda_nv_n(t)=(gf)_n(t), \qquad n=1,2,...,$$
which are ODEs for the coefficients $v_n(t)$ of the series \eqref{nonhu}. By the method of variation of constants we get 
\begin{eqnarray*}
v_n(t)&=&A_n\cos \left(\sqrt{\lambda_n}t\right)+\frac{1}{\sqrt{\lambda_n}}B_n \sin \left(\sqrt{\lambda_n}t\right)\\
&-&\frac{1}{\sqrt{\lambda_n}}\cos \left(\sqrt{\lambda_n}t\right)\int\limits_0^t \sin \left(\sqrt{\lambda_n}s\right)(gf)_n(s)ds\\
&+& \frac{1}{\sqrt{\lambda_n}}\sin \left(\sqrt{\lambda_n}t\right)\int\limits_0^t \cos \left(\sqrt{\lambda_n}s\right)(gf)_n(s)ds,
\end{eqnarray*}
where
$$A_n=\int\limits_0^1g(x)u_0(x)\psi_n(x)dx,\qquad B_n=\int\limits_0^1g(x)u_1(x)\psi_n(x)dx.$$
Thus, we can write a solution of the equation \eqref{nonh-s} in the form
\begin{eqnarray*}\label{sol-nh-v}
v(t,x)&=&\sum\limits_{n=0}^\infty\left[A_n\cos\left(\sqrt{\lambda_n}t\right)+\frac{1}{\sqrt{\lambda_n}}B_n\sin \left(\sqrt{\lambda_n}t\right)\right]\psi_n(x) \nonumber\\
&-& \sum\limits_{n=1}^\infty\frac{1}{\sqrt{\lambda_n}}\cos\left(\sqrt{\lambda_n}t\right)\int\limits_0^t\sin\left(\sqrt{\lambda_n}s\right)(gf)_n(s)ds\psi_n(x)\nonumber\\
&+&\sum\limits_{n=1}^\infty\frac{1}{\sqrt{\lambda_n}}\sin \left(\sqrt{\lambda_n}t\right)\int\limits_0^t \cos \left(\sqrt{\lambda_n}s\right)(gf)_n(s)ds\psi_n(x).
\end{eqnarray*}

According to \eqref{u-v}, we obtain the solution of the equation \eqref{nonh} in the following form
\begin{eqnarray}\label{sol-nh}
u(t,x)&=&\sum\limits_{n=0}^\infty\left[A_n\cos\left(\sqrt{\lambda_n}t\right)+\frac{1}{\sqrt{\lambda_n}}B_n\sin\left(\sqrt{\lambda_n}t\right)\right]\phi_n(x) \nonumber\\
&-& \sum\limits_{n=1}^\infty\frac{1}{\sqrt{\lambda_n}}\cos\left(\sqrt{\lambda_n}t\right)\int\limits_0^t\sin\left(\sqrt{\lambda_n}s\right)(gf)_n(s)ds\phi_n(x)\nonumber\\
&+&\sum\limits_{n=1}^\infty\frac{1}{\sqrt{\lambda_n}}\sin\left(\sqrt{\lambda_n}t\right)\int\limits_0^t \cos \left(\sqrt{\lambda_n}s\right)(gf)_n(s)ds\phi_n(x).
\end{eqnarray}

Let us estimate $\|u(t,\cdot)\|_{L^2}$. For this we use the estimates
\begin{eqnarray}\label{est-nonh}
\int\limits_0^1|u(t,x)|^2dx&\lesssim&\int\limits_0^1\left|\sum\limits_{n=0}^\infty\left[A_n\cos\left(\sqrt{\lambda_n}t\right)+\frac{1}{\sqrt{\lambda_n}}B_n\sin \left(\sqrt{\lambda_n}t\right)\right]\phi_n(x)\right|^2dx \nonumber\\
&+& \int\limits_0^1\left|\sum\limits_{n=1}^\infty\frac{1}{\sqrt{\lambda_n}}\cos\left(\sqrt{\lambda_n}t\right)\int\limits_0^t\sin\left(\sqrt{\lambda_n}s\right)(gf)_n(s)ds\phi_n(x)\right|^2dx\nonumber\\
&+&\int\limits_0^1\left|\sum\limits_{n=1}^\infty\frac{1}{\sqrt{\lambda_n}}\sin \left(\sqrt{\lambda_n}t\right)\int\limits_0^t \cos \left(\sqrt{\lambda_n}s\right)(gf)_n(s)ds\phi_n(x)\right|^2dx\nonumber\\
&=&I_1+I_2+I_3.
\end{eqnarray}
For $I_1$ by using \eqref{est1} for the homogeneous case we have that
\begin{eqnarray*}
I_1&:=&\int\limits_0^1\left|\sum\limits_{n=0}^\infty\left[A_n\cos\left(\sqrt{\lambda_n}t\right)+\frac{1}{\sqrt{\lambda_n}}B_n\sin \left(\sqrt{\lambda_n}t\right)\right]\phi_n(x)\right|^2dx\\
&\lesssim& \exp{\left\{\|p\|_{L^1}\right\}}\left(\|gu_0\|^2_{L^2}+\|gu_1\|^2_{W^{-1}_{\mathcal{L}}}\right).
\end{eqnarray*}
Now we estimate $I_2$ in \eqref{est-nonh} as
\begin{eqnarray}\label{I2+}
I_2&:=&\int\limits_0^1\left|\sum\limits_{n=1}^\infty\frac{1}{\sqrt{\lambda_n}}\cos\left(\sqrt{\lambda_n}t\right)\int\limits_0^t\sin\left(\sqrt{\lambda_n}s\right)(gf)_n(s)ds\phi_n(x)\right|^2dx\nonumber\\
&\lesssim& \exp{\left\{\|p\|_{L^1}\right\}}\sum\limits_{n=1}^\infty \left[\int\limits_0^t|(gf)_n(s)|ds\right]^2.
\end{eqnarray}
Using Holder's inequality and taking into account that $t\in [0,T]$ we get
\begin{equation}\label{I2-1}
    \left[\int\limits_0^t|(gf)_n(s)|ds\right]^2\leq \left[\int\limits_0^T 1\cdot| (gf)_n(t)|dt\right]^2\leq T\int\limits_0^T| (gf)_n(t)|^2dt,
\end{equation}
since $(gf)_n(t)$ is the Fourier's coefficient of the function $g(x)f(t,x)$ and by Parseval's identity we obtain
\begin{eqnarray}\label{I2-2}
\sum\limits_{n=1}^\infty T\int\limits_0^T|(gf)_n(t)|^2dt&=& T\int\limits_0^T\sum\limits_{n=1}^\infty|(gf)_n(t)|^2dt = T\int\limits_0^T\|gf(t,\cdot)\|^2_{L^2}dt\nonumber\\
&\leq& T\|g\|^2_{L^\infty}\int\limits_0^T\|f(t,\cdot)\|^2_{L^2}dt.
\end{eqnarray}
Since
$$\|f\|_{C([0,T],L^2(0,1))}=\max\limits_{0\leq t\leq T}\|f(t,\cdot)\|_{L^2},$$
we arrive at the inequality
\begin{equation}\label{I2-3}
    T\int\limits_0^T\|f(t,\cdot)\|^2_{L^2}dt\leq T^2\|f\|^2_{C([0,T],L^2(0,1))}.
\end{equation}
Thus,
\begin{eqnarray}\label{I2}
I_2&=&\int\limits_0^1\left|\sum\limits_{n=1}^\infty\frac{1}{\sqrt{\lambda_n}}\cos\left(\sqrt{\lambda_n}t\right)\int\limits_0^t\sin\left(\sqrt{\lambda_n}s\right)(gf)_n(s)ds\phi_n(x)\right|^2dx\nonumber\\
&\lesssim&  T^2\exp{\left\{\|p\|_{L^1}\right\}}\|g\|^2_{L^\infty}\|f\|^2_{C([0,T],L^2(0,1))},
\end{eqnarray}
and $I_3$ in \eqref{est-nonh} is evaluated similarly
\begin{eqnarray}\label{I3}
    I_3&:=&\int\limits_0^1\left|\sum\limits_{n=1}^\infty\frac{1}{\sqrt{\lambda_n}}\sin \left(\sqrt{\lambda_n}t\right)\int\limits_0^t \cos \left(\sqrt{\lambda_n}s\right)(gf)_n(s)ds\phi_n(x)\right|^2dx \nonumber\\
    &\lesssim& T^2\exp{\left\{\|p\|_{L^1}\right\}}\|g\|^2_{L^\infty}\|f\|^2_{C([0,T],L^2(0,1))}.
\end{eqnarray}

We finally get
$$
    \|u(t,\cdot)\|^2_{L^2}\lesssim \exp{\left\{\|p\|_{L^1}\right\}}\left(\|gu_0\|^2_{L^2}+\|gu_1\|^2_{W^{-1}_\mathcal{L}}+2T^2\|g\|^2_{L^\infty}\|f\|^2_{C([0,T],L^2(0,1))}\right).
$$
Let us estimate $\|\partial_tu(t,\cdot)\|_{L^2}$, for this we calculate $\partial_tu(t,x)$ as follows
\begin{eqnarray*}
\partial_tu(t,x)&=&\sum\limits_{n=0}^\infty\left[-\sqrt{\lambda_n}A_n\sin\left(\sqrt{\lambda_n}t\right)+B_n\cos \left(\sqrt{\lambda_n}t\right)\right]\phi_n(x) \\
&+& \sum\limits_{n=1}^\infty\sin\left(\sqrt{\lambda_n}t\right)\int\limits_0^t\sin\left(\sqrt{\lambda_n}s\right)(gf)_n(s)ds\phi_n(x)\\
&+&
\sum\limits_{n=1}^\infty\cos \left(\sqrt{\lambda_n}t\right)\int\limits_0^t \cos \left(\sqrt{\lambda_n}s\right)(gf)_n(s)ds\phi_n(x),
\end{eqnarray*}
then
\begin{eqnarray*}
\|\partial_tu(t,\cdot)\|^2_{L^2}&=&\int\limits_0^1|\partial_tu(t,x)|^2dx\\
&\lesssim&\int\limits_0^1\left|\sum\limits_{n=0}^\infty\left[-\sqrt{\lambda_n}A_n\sin\left(\sqrt{\lambda_n}t\right)+B_n\cos \left(\sqrt{\lambda_n}t\right)\right]\phi_n(x)\right|^2dx \\
&+& \int\limits_0^1\left|\sum\limits_{n=1}^\infty\sin\left(\sqrt{\lambda_n}t\right)\int\limits_0^t\sin\left(\sqrt{\lambda_n}s\right)(gf)_n(s)ds\phi_n(x)\right|^2dx\\
&+&
\int\limits_0^1\left|\sum\limits_{n=1}^\infty\cos \left(\sqrt{\lambda_n}t\right)\int\limits_0^t \cos \left(\sqrt{\lambda_n}s\right)(gf)_n(s)ds\phi_n(x)\right|^2dx\\
&\lesssim&\sum\limits_{n=0}^\infty\left|-\sqrt{\lambda_n}A_n\sin\left(\sqrt{\lambda_n}t\right)+B_n\cos \left(\sqrt{\lambda_n}t\right)\right|^2 \\
&+& \sum\limits_{n=1}^\infty\left|\sin\left(\sqrt{\lambda_n}t\right)\int\limits_0^t\sin\left(\sqrt{\lambda_n}s\right)(gf)_n(s)ds\right|^2\\
&+&\sum\limits_{n=1}^\infty\left|\cos \left(\sqrt{\lambda_n}t\right)\int\limits_0^t \cos \left(\sqrt{\lambda_n}s\right)(gf)_n(s)ds\right|^2,
\end{eqnarray*}
by using the \eqref{est2} for the homogeneous case and conducting evaluations as in \eqref{I2}, \eqref{I3} we obtain
$$\|\partial_tu(t,\cdot)\|^2_{L^2}\lesssim \exp{\{\|p\|_{L^1}\}}\left(\|gu_0\|^2_{W^1_\mathcal{L}}+\|gu_1\|^2_{L^2}+2T^2\|g\|^2_{L^\infty}\|f\|^2_{C([0,T],L^2(0,1))}\right).$$
 
For \eqref{es-nh3} we get
\begin{eqnarray}\label{nh-ux}
\|\partial_x u(t,\cdot)\|^2_{L^2}&=&\int\limits_0^1|\partial_xu(t,x)|^2dt\nonumber\\
&\lesssim&
\int\limits_0^1\left|\sum\limits_{n=1}^\infty\left[A_n\cos\left(\sqrt{\lambda_n}t\right)+\frac{1}{\sqrt{\lambda_n}}B_n\sin\left(\sqrt{\lambda_n}t\right)\right]\phi'_n(x)\right|^2dx\nonumber\\
&+& \int\limits_0^1\left|\sum\limits_{n=1}^\infty\frac{1}{\sqrt{\lambda_n}}\cos\left(\sqrt{\lambda_n}t\right)\int\limits_0^t\sin\left(\sqrt{\lambda_n}s\right)(gf)_n(s)ds\phi'_n(x)\right|^2dx\nonumber\\
&+&\int\limits_0^1\left|\sum\limits_{n=1}^\infty\frac{1}{\sqrt{\lambda_n}}\sin\left(\sqrt{\lambda_n}t\right)\int\limits_0^t \cos \left(\sqrt{\lambda_n}s\right)(gf)_n(s)ds\phi'_n(x)\right|^2dx\nonumber\\
&=&K_1+K_2+K_3.
\end{eqnarray}
Taking \eqref{u_x} into account,  we have that
\begin{eqnarray*}
K_1&:=&\int\limits_0^1\left|\sum\limits_{n=1}^\infty\left[A_n\cos\left(\sqrt{\lambda_n}t\right)+\frac{1}{\sqrt{\lambda_n}}B_n\sin\left(\sqrt{\lambda_n}t\right)\right]\phi'_n(x)\right|^2dx\\
&\lesssim&\exp{\left\{\|p\|_{L^1}\right\}}\left\{\left(1+\|\nu\|^2_{L^2}\left(\|\nu\|^2_{L^2}+\|p\|^2_{L^2}+\|p'\|^2_{L^1}\right)\right)\left(\|gu_0\|^2_{W^1_{\mathcal{L}}}+\|gu_1\|^2_{L^2}\right)\right.\nonumber\\
&+& \left.\left(\|p\|^2_{L^\infty}+\|\nu\|^2_{L^\infty}\right)\left(\|gu_0\|^2_{L^2}+\|gu_1\|^2_{W^{-1}_{\mathcal{L}}}\right)\right\}. 
\end{eqnarray*}
For $K_2$ in \eqref{nh-ux} using \eqref{phi'}, \eqref{38}, \eqref{39} and \eqref{40} we obtain
\begin{eqnarray}\label{K2}
K_2&:=&\int\limits_0^1\left|\sum\limits_{n=1}^\infty\frac{1}{\sqrt{\lambda_n}}\cos\left(\sqrt{\lambda_n}t\right)\int\limits_0^t\sin\left(\sqrt{\lambda_n}s\right)(gf)_n(s)ds\phi'_n(x)\right|^2dx\nonumber\\
&\lesssim& \exp{\left\{\|p\|_{L^1}\right\}}\left(1+\|\nu\|^2_{L^2}\left(\|\nu\|^2_{L^2}+\|p\|^2_{L^2}+\|p'\|^2_{L^1}\right)\right)\times\nonumber\\
&\times&\sum\limits_{n=1}^\infty\left|\cos\left(\sqrt{\lambda_n}t\right)\int\limits_0^t\sin\left(\sqrt{\lambda_n}s\right)(gf)_n(s)ds\right|^2\nonumber\\
&+&\exp{\left\{\|p\|_{L^1}\right\}}\left(\|p\|^2_{L^\infty}+\|\nu\|^2_{L^\infty}\right)\times\nonumber\\
&\times&\sum\limits_{n=1}^\infty\left|\frac{1}{\sqrt{\lambda_n}}\cos\left(\sqrt{\lambda_n}t\right)\int\limits_0^t\sin\left(\sqrt{\lambda_n}s\right)(gf)_n(s)ds\right|^2,
\end{eqnarray}
where
\begin{equation*}
\begin{array}{l}
\sum\limits_{n=1}^\infty\left|\frac{1}{\sqrt{\lambda_n}}\cos\left(\sqrt{\lambda_n}t\right)\int\limits_0^t\sin\left(\sqrt{\lambda_n}s\right)(gf)_n(s)ds\right|^2\\
\leq \sum\limits_{n=1}^\infty\left|\cos\left(\sqrt{\lambda_n}t\right)\int\limits_0^t\sin\left(\sqrt{\lambda_n}s\right)(gf)_n(s)ds\right|^2,
\end{array}
\end{equation*}
since $\lambda_n\geq 1,\,n=1,2,...,$ according to \eqref{I2}, so it is enough to estimate 
\begin{equation}\label{f2}
    \left|\sum\limits_{n=1}^\infty\cos\left(\sqrt{\lambda_n}t\right)\int\limits_0^t\sin\left(\sqrt{\lambda_n}s\right)(gf)_n(s)ds\right|^2 \lesssim T^2\|g\|^2_{L^\infty}\|f\|^2_{C([0,T],L^2(0,1))}.
\end{equation}

Thus,
\begin{eqnarray*}
K_2&\lesssim&\exp{\left\{\|p\|_{L^1}\right\}}\left(1+\|\nu\|^2_{L^2}\left(\|\nu\|^2_{L^2}+\|p\|^2_{L^2}+\|p'\|^2_{L^1}\right)+\|p\|^2_{L^\infty}+\|\nu\|^2_{L^\infty}\right)\times\\
&\times&T^2\|g\|^2_{L^\infty}\|f\|^2_{C([0,T],L^2(0,1))}.
\end{eqnarray*}
For $K_3$ in \eqref{nh-ux} we similarly get
\begin{eqnarray*}
K_3&\lesssim&\exp{\left\{\|p\|_{L^1}\right\}}\left(1+\|\nu\|^2_{L^2}\left(\|\nu\|^2_{L^2}+\|p\|^2_{L^2}+\|p'\|^2_{L^1}\right)+\|p\|^2_{L^\infty}+\|\nu\|^2_{L^\infty}\right)\times\\
&\times&T^2\|g\|^2_{L^\infty}\|f\|^2_{C([0,T],L^2(0,1))}.
\end{eqnarray*}
Taking into account the estimates for $K_1$, $K_2$ and $K_3$, we obtain
\begin{eqnarray*}
\|\partial_x u(t,\cdot)\|^2_{L^2}&\lesssim&\exp{\left\{\|p\|_{L^1}\right\}}\left\{\left(1+\|\nu\|^2_{L^2}\left(\|\nu\|^2_{L^2}+\|p\|^2_{L^2}+\|p'\|^2_{L^1}\right)\right)\times\right.\nonumber\\
&\times&\left(\|gu_0\|^2_{W^1_{\mathcal{L}}}+\|gu_1\|^2_{L^2}\right)+ \left(\|p\|^2_{L^\infty}+\|\nu\|^2_{L^\infty}\right)\left(\|gu_0\|^2_{L^2}+\|gu_1\|^2_{W^{-1}_{\mathcal{L}}}\right)\\
&+&\left(1+\|\nu\|^2_{L^2}\left(\|\nu\|^2_{L^2}+\|p\|^2_{L^2}+\|p'\|^2_{L^1}\right)+\|p\|^2_{L^\infty}+\|\nu\|^2_{L^\infty}\right)\times\\
&\times&\left.2T^2\|g\|^2_{L^\infty}\|f\|^2_{C([0,T],L^2(0,1))}\right\}.
\end{eqnarray*}

We have  $\phi_n''(x)=p(x)\phi'_n(x)+(q(x)-\lambda_n)\phi_n(x)$, so that
\begin{eqnarray*}
\|\partial^2_xu(t,\cdot)\|^2_{L^2}&=&\int\limits_0^1|\partial^2_xu(t,x)|^2dx\\
&\lesssim& \int\limits_0^1\left|\sum\limits_{n=0}^\infty\left[A_n\cos\left(\sqrt{\lambda_n}t\right)+\frac{1}{\sqrt{\lambda_n}}B_n\sin\left(\sqrt{\lambda_n}t\right)\right]\times\right.\\
&\times&\left(p(x)\phi'_n(x)+(q(x)-\lambda_n\right)\phi_n(x))\Biggr|^2dx \\
&+& \int\limits_0^1\left|\sum\limits_{n=1}^\infty\frac{1}{\sqrt{\lambda_n}}\cos\left(\sqrt{\lambda_n}t\right)\int\limits_0^t\sin\left(\sqrt{\lambda_n}s\right)(gf)_n(s)ds\times\right.\\
&\times&\left(p(x)\phi'_n(x)+(q(x)-\lambda_n\right)\phi_n(x))\Biggr|^2dx \\
&+&\int\limits_0^1\left|\sum\limits_{n=1}^\infty\frac{1}{\sqrt{\lambda_n}}\sin\left(\sqrt{\lambda_n}t\right)\int\limits_0^t \cos \left(\sqrt{\lambda_n}s\right)(gf)_n(s)ds\times\right.\\
&\times&\left(p(x)\phi'_n(x)+(q(x)-\lambda_n\right)\phi_n(x))\Biggr|^2dx \\
&=& E_1+E_2+E_3.
\end{eqnarray*}
Using \eqref{est4} we get
\begin{eqnarray*}
E_1&:=&\int\limits_0^1\left|\sum\limits_{n=0}^\infty\left[A_n\cos\left(\sqrt{\lambda_n}t\right)+\frac{1}{\sqrt{\lambda_n}}B_n\sin\left(\sqrt{\lambda_n}t\right)\right]\times\right.\\
&\times&(p(x)\phi'_n(x)+(q(x)-\lambda_n)\phi_n(x))\Bigg|^2dx\\&\lesssim&
\exp{\{\|p\|_{L^2}\}}\Big\{\|p\|^2_{L^\infty}\left(\left(1+\|\nu\|^2_{L^2}\left(\|\nu\|^2_{L^2}+\|p\|^2_{L^2}+\|p'\|^2_{L^1}\right)\right)\times\right.\nonumber\\
&\times& \left.\left(\|gu_0\|^2_{W^1_{\mathcal{L}}}+\|gu_1\|^2_{L^2}\right)+\left(\|p\|^2_{L^\infty}+\|\nu\|^2_{L^\infty}\right)\left(\|gu_0\|^2_{L^2}+\|gu_1\|^2_{W^{-1}_{\mathcal{L}}}\right)\right)\nonumber\\
&+&\left.\|q\|^2_{L^\infty} \left(\|gu_0\|^2_{L^2}+\|gu_1\|^2_{W^{-1}_{\mathcal{L}}}\right)+\left\|gu_0\right\|^2_{W^2_{\mathcal{L}}}+\|gu_1\|^2_{W^1_{\mathcal{L}}}\right\}.
\end{eqnarray*}
Let us estimate $E_2$ by using \eqref{K2}-\eqref{f2}
\begin{eqnarray*}
E_2&:=&\int\limits_0^1\left|\sum\limits_{n=1}^\infty\frac{1}{\sqrt{\lambda_n}}\cos\left(\sqrt{\lambda_n}t\right)\int\limits_0^t\sin\left(\sqrt{\lambda_n}s\right)(gf)_n(s)ds\times\right.\\
&\times&(p(x)\phi'_n(x)+(q(x)-\lambda_n)\phi_n(x))\Biggr|^2dx\\
&\lesssim& \exp{\{\|p\|_{L^1}\}}\Big\{\left(\|p\|^2_{L^\infty}\left(1+\|\nu\|^2_{L^2}\left(\|\nu\|^2_{L^2}+\|p\|^2_{L^2}+\|p'\|^2_{L^1}\right)+\|p\|^2_{L^\infty}+\|\nu\|^2_{L^\infty}\right)\right.\\
&+&\left.\left.\|q\|^2_{L^\infty}\right)T^2\|g\|^2_{L^\infty}\|f\|^2_{C([0,T],L^2(0,1))}+T^2\|g\|^2_{L^\infty}\|f\|^2_{C^1([0,T],L^2(0,1))}\right\}.
\end{eqnarray*}
We similarly get
\begin{eqnarray*}
E_3&:=&\int\limits_0^1\left|\sum\limits_{n=1}^\infty\frac{1}{\sqrt{\lambda_n}}\sin\left(\sqrt{\lambda_n}t\right)\int\limits_0^t \cos \left(\sqrt{\lambda_n}s\right)(gf)_n(s)ds\times\right.\\
&\times&(p(x)\phi'_n(x)+(q(x)-\lambda_n)\phi_n(x))\Biggr|^2dx\\
&\lesssim& \exp{\{\|p\|_{L^1}\}}\Big\{\left(\|p\|^2_{L^\infty}\left(1+\|\nu\|^2_{L^2}\left(\|\nu\|^2_{L^2}+\|p\|^2_{L^2}+\|p'\|^2_{L^1}\right)+\|p\|^2_{L^\infty}+\|\nu\|^2_{L^\infty}\right)\right.\\
&+&\left.\left.\|q\|^2_{L^\infty}\right)T^2\|g\|^2_{L^\infty}\|f\|^2_{C([0,T],L^2(0,1))}+T^2\|g\|^2_{L^\infty}\|f\|^2_{C^1([0,T],L^2(0,1))}\right\}.
\end{eqnarray*}
Therefore,
\begin{eqnarray*}
\|\partial^2_xu(t,\cdot)\|^2_{L^2}&\lesssim& 
\exp{\{\|p\|_{L^2}\}}\|p\|^2_{L^\infty}\Big\{\left(1+\|\nu\|^2_{L^2}\left(\|\nu\|^2_{L^2}+\|p\|^2_{L^2}+\|p'\|^2_{L^1}\right)\right)\times\nonumber\\
&\times& \left(\|gu_0\|^2_{W^1_{\mathcal{L}}}+\|gu_1\|^2_{L^2}\right)+\left(\|p\|^2_{L^\infty}+\|\nu\|^2_{L^\infty}\right)\left(\|gu_0\|^2_{L^2}+\|gu_1\|^2_{W^{-1}_{\mathcal{L}}}\right)\nonumber\\
&+&\|q\|^2_{L^\infty} \left(\|gu_0\|^2_{L^2}+\|gu_1\|^2_{W^{-1}_{\mathcal{L}}}\right)+\left\|gu_0\right\|^2_{W^2_{\mathcal{L}}}+\|gu_1\|^2_{W^1_{\mathcal{L}}}\nonumber\\
&+& \left(\|p\|^2_{L^\infty}\left(1+\|\nu\|^2_{L^2}\left(\|\nu\|^2_{L^2}+\|p\|^2_{L^2}+\|p'\|^2_{L^1}\right)+\|p\|^2_{L^\infty}+\|\nu\|^2_{L^\infty}\right)\right.\\
&+&\left.\left.\|q\|^2_{L^\infty}\right)2T^2\|g\|^2_{L^\infty}\|f\|^2_{C([0,T],L^2(0,1))}+T^2\|g\|^2_{L^\infty}\|f\|^2_{C^1([0,T],L^2(0,1))}\right\}.
\end{eqnarray*}

Let us estimate $\|u(t,\cdot)\|^2_{W^k_{\mathcal{L}}}$:
\begin{eqnarray*}
\|u(t,\cdot)\|^2_{W^k_{\mathcal{L}}}&\lesssim&
\int\limits_0^1\left|\sum\limits_{n=0}^\infty\left[A_n\cos\left(\sqrt{\lambda_n}t\right)+\frac{1}{\sqrt{\lambda_n}}B_n\sin \left(\sqrt{\lambda_n}t\right)\right]\mathcal{L}^\frac{k}{2}\phi_n(x)\right|^2dx \nonumber\\
&+& \int\limits_0^1\left|\sum\limits_{n=1}^\infty\frac{1}{\sqrt{\lambda_n}}\cos\left(\sqrt{\lambda_n}t\right)\int\limits_0^t\sin\left(\sqrt{\lambda_n}s\right)(gf)_n(s)ds\mathcal{L}^\frac{k}{2}\phi_n(x)\right|^2dx\nonumber\\
&+&\int\limits_0^1\left|\sum\limits_{n=1}^\infty\frac{1}{\sqrt{\lambda_n}}\sin \left(\sqrt{\lambda_n}t\right)\int\limits_0^t \cos \left(\sqrt{\lambda_n}s\right)(gf)_n(s)ds\mathcal{L}^\frac{k}{2}\phi_n(x)\right|^2dx\nonumber\\
&=&F_1+F_2+F_3.
\end{eqnarray*}
By using \eqref{est5} we have
\begin{eqnarray*}
F_1&:=&\int\limits_0^1\left|\sum\limits_{n=0}^\infty\left[A_n\cos\left(\sqrt{\lambda_n}t\right)+\frac{1}{\sqrt{\lambda_n}}B_n\sin \left(\sqrt{\lambda_n}t\right)\right]\mathcal{L}^\frac{k}{2}\phi_n(x)\right|^2dx\\
&\lesssim&\exp{\left\{\|p\|_{L^1}\right\}}\left(\left\|gu_0\right\|^2_{W^k_{\mathcal{L}}}+\left\|gu_1\right\|^2_{W^{k-1}_{\mathcal{L}}}\right).
\end{eqnarray*}
Using that $\mathcal{L}^{\frac{k}{2}}\phi_n(x)=\lambda ^{\frac{k}{2}}\phi_n(x)$,  \eqref{norm-phi} and following as \eqref{I2+}-\eqref{I2-3}, we obtain
\begin{eqnarray*}
F_2&:=& \int\limits_0^1\left|\sum\limits_{n=1}^\infty\frac{1}{\sqrt{\lambda_n}}\cos\left(\sqrt{\lambda_n}t\right)\int\limits_0^t\sin\left(\sqrt{\lambda_n}s\right)(gf)_n(s)ds\mathcal{L}^\frac{k}{2}\phi_n(x)\right|^2dx\\
&\lesssim&\exp{\|p\|_{L^1}}\int\limits_0^1\left|\sum\limits_{n=1}^\infty\int\limits_0^t\sin\left(\sqrt{\lambda_n}s\right)(gf)_n(s)ds\lambda^\frac{k-1}{2}\psi_n(x)\right|^2dx\\
&\lesssim&T\exp{\|p\|_{L^1}}\int\limits_0^T\sum\limits_{n=1}^\infty\left|\lambda^\frac{k-1}{2}(gf)_n(t)\right|^2dt=T\exp{\|p\|_{L^1}}\int\limits_0^T\left\|\lambda^\frac{k-1}{2}gf(t,\cdot)\right\|^2_{L^2}dt\\
&=&T\exp{\|p\|_{L^1}}\int\limits_0^T\left\|\mathcal{L}^\frac{k-1}{2}gf(t,\cdot)\right\|^2_{L^2}dt=T\exp{\|p\|_{L^1}}\int\limits_0^T\left\|gf(t,\cdot)\right\|^2_{\mathcal{L}^{k-1}}dt\\
&\leq& T^2\exp{\|p\|_{L^1}}\left\|gf(\cdot,\cdot)\right\|^2_{C([0,T],W^{k-1}_\mathcal{L}(0,1))}.
\end{eqnarray*}
We similarly get
\begin{eqnarray*}
F_3&:=&\int\limits_0^1\left|\sum\limits_{n=1}^\infty\frac{1}{\sqrt{\lambda_n}}\sin \left(\sqrt{\lambda_n}t\right)\int\limits_0^t \cos \left(\sqrt{\lambda_n}s\right)(gf)_n(s)ds\mathcal{L}^\frac{k}{2}\phi_n(x)\right|^2dx\nonumber\\
&\lesssim&T^2\exp{\|p\|_{L^1}}\left\|gf(\cdot,\cdot)\right\|^2_{C([0,T],W^{k-1}_\mathcal{L}(0,1))}.
\end{eqnarray*}
Thus, 
\begin{eqnarray*}
\|u(t,\cdot)\|^2_{W^k_{\mathcal{L}}}\lesssim \exp{\left\{\|p\|_{L^1}\right\}}\left(\left\|gu_0\right\|^2_{W^k_{\mathcal{L}}}+\left\|gu_1\right\|^2_{W^{k-1}_{\mathcal{L}}}+2T^2\left\|gf(\cdot,\cdot)\right\|^2_{C([0,T],W^{k-1}_\mathcal{L}(0,1))}\right).
\end{eqnarray*}

The proof of Theorem \ref{non-hom} is complete.
\end{proof}

We will now express all the estimates in terms of the coefficients, to be used in the very weak well-posedness in Section \ref{ch4}.

\begin{cor}\label{cor2}
Assume that $p'\in L^2(0,1)$, $q=\nu'$, $\nu \in L^\infty(0,1)$ and $f(t,x)\in C^1([0,T],L^2(0,1))$. If the initial data satisfy $(u_0,\, u_1) \in L^2(0,1)$ and $(u_0'',\, u''_1)\in L^2(0,1)$, then the non-homogeneous wave equation with initial/boundary conditions  \eqref{nonh} has unique solution $u\in C([0,T], L^2(0,1))$ such that
\begin{equation}\label{ec-nh1}
    \|u(t,\cdot)\|^2_{L^2}\lesssim \exp{\{2\|p\|_{L^2}\}}\left(\|u_0\|^2_{L^2}+\|u_1\|^2_{L^2}+2T^2\|f\|_{C([0,1],L^2(0,1))}\right),
\end{equation}
\begin{eqnarray}\label{ec-nh2}
\|\partial_t u(t,\cdot)\|^2_{L^2}&\lesssim& \exp{\{2\|p\|_{L^1}\}}\left\{ \|u''_0\|^2_{L^2}+\|p\|^2_{L^\infty}\|u'_0\|^2_{L^2}+\left(\|p\|^4_{L^\infty}+\|p'\|^2_{L^\infty}\right.\right.\nonumber\\
&+&\left.\left.\|q\|^2_{L^\infty}\right)\|u_0\|^2_{L^2}+\|u_1\|^2_{L^2}+2T^2\|g\|^2_{L^\infty}\|f\|_{C([0,1],L^2(0,1))}\right\},
\end{eqnarray}
\begin{eqnarray}\label{ec-nh3}
\|\partial_xu(t,\cdot)\|^2_{L^2}&\lesssim& \exp{\left\{2\|p\|_{L^1}\right\}}\left\{\left(1+\|\nu\|^2_{L^2}\left(\|\nu\|^2_{L^2}+\|p\|^2_{L^2}+\|p'\|^2_{L^1}\right)\right)\times\right.\nonumber\\
&\times& \left(\|u''_0\|^2_{L^2}+\|p\|^2_{L^\infty}\|u'_0\|^2_{L^2}+\left(\|p\|^4_{L^\infty}+\|p'\|^2_{L^\infty}+\|q\|^2_{L^\infty}\right)\|u_0\|^2_{L^2}\right.\nonumber\\
&+&\left.\|u_1\|^2_{L^2}\right)+\left(\|p\|^2_{L^\infty}+\|\nu\|^2_{L^\infty}\right)\left(\|u_0\|^2_{L^2}+\|u_1\|^2_{L^2}\right)\nonumber\\
&+&\left(1+\|\nu\|^2_{L^2}\left(\|\nu\|^2_{L^2}+\|p\|^2_{L^2}+\|p'\|^2_{L^1}\right)+\|p\|^2_{L^\infty}+\|\nu\|^2_{L^\infty}\right)\times\nonumber\\
&\times&\left.2T^2\|f\|^2_{C([0,T],L^2(0,1))}\right\},
\end{eqnarray}
\begin{eqnarray}\label{ec-nh4}
\|\partial^2_xu(t,\cdot)\|^2_{L^2}&\lesssim&\exp{\left\{2\|p\|_{L^1}\right\}}\Big\{\left(1+\|\nu\|^2_{L^2}\left(\|\nu\|^2_{L^2}+\|p\|^2_{L^2}+\|p'\|^2_{L^1}\right)\right)\times\nonumber\\
&\times& \left(\|u''_0\|^2_{L^2}+\|p\|^2_{L^\infty}\|u'_0\|^2_{L^2}+\left(\|p\|^4_{L^\infty}+\|p'\|^2_{L^\infty}+\|q\|^2_{L^\infty}\right)\|u_0\|^2_{L^2}\right.\nonumber\\
&+&\left.\|u_1\|^2_{L^2}\right)+\left(\|p\|^2_{L^\infty}+\|\nu\|^2_{L^\infty}\right)\left(\|u_0\|^2_{L^2}+\|u_1\|^2_{L^2}\right)\nonumber\\
&+&\|u''_0\|^2_{L^2}+\|u''_1\|^2_{L^2}+\|p\|^2_{L^\infty}\left(\|u'_0\|^2_{L^2}+\|u'_1\|^2_{L^2}\right)\nonumber\\
&+&\left(\|p\|^4_{L^\infty}+\|p'\|^2_{L^\infty}+\|q\|^2_{L^\infty}\right)\left(\|u_0\|^2_{L^2}+\|u_1\|^2_{L^2}\right)\nonumber\\
&+&\left(\|p\|^2_{L^\infty}\left(1+\|\nu\|^2_{L^2}\left(\|\nu\|^2_{L^2}+\|p\|^2_{L^2}+\|p'\|^2_{L^1}\right)+\|p\|^2_{L^\infty}+\|\nu\|^2_{L^\infty}\right)\right.\nonumber\\
&+&\left.\left.\|q\|^2_{L^\infty}\right)2T^2\|f\|^2_{C([0,T],L^2(0,1))}+T^2\|f\|^2_{C^1([0,T],L^2(0,1))}\right\},
\end{eqnarray}
where the constants in these inequalities are independent of $u_0$, $u_1$, $p$, $q$ and $f$.
\end{cor}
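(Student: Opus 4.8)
The plan is to obtain Corollary \ref{cor2} from Theorem \ref{non-hom} by exactly the mechanism that turns Theorem \ref{th1} into Corollary \ref{cor1}: on the right-hand sides of \eqref{es-nh1}--\eqref{es-nh4} every operator-adapted quantity --- the weighted Sobolev norms $\|gu_0\|_{W^j_{\mathcal L}}$, $\|gu_1\|_{W^j_{\mathcal L}}$ and the factors $\|g\|_{L^\infty}$ --- is to be re-expressed in terms of $\|p\|_{L^1}$, $\|p\|_{L^\infty}$, $\|p'\|_{L^\infty}$, $\|q\|_{L^\infty}$ and the $L^2$-norms of $u_0,u_0',u_0'',u_1,u_1',u_1''$. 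Existence, uniqueness and the regularity $u\in C([0,T],L^2(0,1))$ are inherited directly from Theorem \ref{non-hom} with $k=0$, once one notes that the reductions below show $(gu_0,gu_1)\in W^1_{\mathcal L}\times L^2$ under the present hypotheses; hence only the estimates require work, and in fact just \eqref{ec-nh1}--\eqref{ec-nh4}.

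First I would reduce the weight. By \eqref{g-exp}, $\|g\|^2_{L^\infty}\le e^{\|p\|_{L^1}}$, so any term of the form $e^{\|p\|_{L^1}}\|g\|^2_{L^\infty}(\cdots)$ is at most $e^{2\|p\|_{L^1}}(\cdots)$, and in particular $\|gu_i\|^2_{L^2}\le e^{\|p\|_{L^1}}\|u_i\|^2_{L^2}$ for $i=0,1$. Since $g'=-\tfrac12 pg$ and $g''=\bigl(\tfrac14 p^2-\tfrac12 p'\bigr)g$, the Leibniz rule together with \eqref{g-exp} produces precisely the bounds \eqref{gu'} and \eqref{gu''} for $\|(gu_i)'\|_{L^2}$ and $\|(gu_i)''\|_{L^2}$ in terms of $\|u_i\|_{L^2},\|u_i'\|_{L^2},\|u_i''\|_{L^2}$, $\|p\|_{L^\infty}$ and $\|p'\|_{L^\infty}$.

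Next I would reduce the $W^j_{\mathcal L}$-norms. Using $\lambda_n\ge 1$ one has $\|gu_1\|^2_{W^{-1}_{\mathcal L}}\le\|gu_1\|^2_{L^2}$ as in \eqref{B-lam}. For the positive-order norms one writes $\|gu_i\|^2_{W^1_{\mathcal L}}=\|\mathcal{L}^{1/2}(gu_i)\|^2_{L^2}=\sum_n\lambda_n|\langle gu_i,\psi_n\rangle|^2=\sum_n|\langle \mathcal{L}(gu_i),\psi_n\rangle|^2$, and since $\mathcal{L}(gu_i)=-(gu_i)''+p(gu_i)'+q(gu_i)$, Parseval's identity gives, exactly as in \eqref{LA},
$$\|gu_i\|^2_{W^1_{\mathcal L}}\lesssim\|(gu_i)''\|^2_{L^2}+\|p\|^2_{L^\infty}\|(gu_i)'\|^2_{L^2}+\|q\|^2_{L^\infty}\|gu_i\|^2_{L^2};$$
the same computation applied to $\|gu_0\|^2_{W^2_{\mathcal L}}=\|\mathcal{L}(gu_0)\|^2_{L^2}$ yields the identical right-hand side. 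Substituting \eqref{gu'}, \eqref{gu''} and \eqref{g-exp} into these identities converts every $W^j_{\mathcal L}$-norm of $gu_0$, $gu_1$ into the coefficient-and-data form of \eqref{ec-nh1}--\eqref{ec-nh4}. The source terms need no such reduction: in \eqref{es-nh1}--\eqref{es-nh3} the $f$-contribution is a constant (already written in norms of $p,q,\nu$) times $\|g\|^2_{L^\infty}\|f\|^2_{C([0,T],L^2(0,1))}$, and in \eqref{es-nh4} one also gets a $\|g\|^2_{L^\infty}\|f\|^2_{C^1([0,T],L^2(0,1))}$ term; in all cases $\|g\|^2_{L^\infty}e^{\|p\|_{L^1}}\le e^{2\|p\|_{L^1}}$ puts them into the required form, and the $W^{k-1}_{\mathcal L}$-valued norm of $gf$ from \eqref{es-nh5} simply does not appear in \eqref{ec-nh1}--\eqref{ec-nh4}.

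Finally I would assemble: take each of \eqref{es-nh1}--\eqref{es-nh4} in turn, split its right-hand side into the initial-data block (terms with $gu_0,gu_1$) and the source block (terms with $f$), apply the weight reductions of the two preceding paragraphs to the first block exactly as in the proof of Corollary \ref{cor1}, bound $\|g\|^2_{L^\infty}\le e^{\|p\|_{L^1}}$ in the second block, and collect the prefactors to reach \eqref{ec-nh1}, \eqref{ec-nh2}, \eqref{ec-nh3}, \eqref{ec-nh4} respectively. I expect the only delicate point to be \eqref{ec-nh4}: \eqref{es-nh4} carries an extra factor $\|p\|^2_{L^\infty}$ in front of part of its right-hand side and mixes $W^2_{\mathcal L}$- and $W^1_{\mathcal L}$-norms of both $gu_0$ and $gu_1$ with the $C$- and $C^1$-norms of $f$, so some care is needed to arrange the many resulting terms into the precise grouping displayed there; this is pure bookkeeping, with no analytic input beyond \eqref{g-exp}, \eqref{gu'}, \eqref{gu''} and the Parseval computation behind \eqref{LA}.
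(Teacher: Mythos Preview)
Your proposal is correct and matches the paper's own proof, which is the single sentence ``The proof of Corollary \ref{cor2} immediately follows from Corollary \ref{cor1} and Theorem \ref{non-hom}.'' You have simply written out the mechanism behind that one-liner: feed the estimates \eqref{es-nh1}--\eqref{es-nh4} of Theorem \ref{non-hom} through the same reductions (\eqref{g-exp}, \eqref{B-lam}, \eqref{LA}, \eqref{gu'}, \eqref{gu''}) that turn Theorem \ref{th1} into Corollary \ref{cor1}. One tiny slip: in your Parseval step the chain $\sum_n\lambda_n|\langle gu_i,\psi_n\rangle|^2=\sum_n|\langle \mathcal L(gu_i),\psi_n\rangle|^2$ should be an inequality $\le$ (using $\lambda_n\ge1$), exactly as in the paper's passage to \eqref{LA}.
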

The proof of Corollary \ref{cor2} immediately follows from Corollary \ref{cor1} and Theorem \ref{non-hom}.

\section{Very weak solutions}\label{ch4}
In this section we will analyse the solutions for less regular potentials $q$ and $p$. For this we will be using the notion of very weak solutions. 

Assume that the coefficients $q$, $p$ and initial data $(u_0,\, u_1)$ are the distributions on $(0,1)$. 

\begin{defn}\label{D1} (i)
A net of functions $\left(u_\varepsilon=u_\varepsilon(t,x)\right)$ is said to be $L^2$-moderate if there exist $N\in \mathbb{N}_0$ and $C>0$ such that 
$$\|u_\varepsilon(t,\cdot)\|_{L^2}\leq C \varepsilon^{-N}, \quad \text{for all } t\in[0,T].$$
(ii) Moderateness of data: a net of functions $(u_{0,\varepsilon}=u_{0,\varepsilon}(x))$ is said to be $H^2$-moderate if there exist $N\in \mathbb{N}_0$ and $C>0$ such that 
$$\|u'_{0,\varepsilon}\|_{L^2}\leq C\varepsilon^{-N}, \quad \|u''_{0,\varepsilon}\|_{L^2}\leq C\varepsilon^{-N}.$$
\end{defn}

\begin{defn}\label{D1-1} (i)
A net of functions $\left(\nu_\varepsilon=\nu_\varepsilon(x)\right)$ is said to be $L^\infty_1$-moderate if there exist $N\in \mathbb{N}_0$ and $C>0$ such that 
\begin{equation}\label{nu-mod}
    \|\nu_\varepsilon\|_{L^\infty}\leq C \varepsilon^{-N}, \qquad \|\nu'_\varepsilon\|_{L^\infty}\leq C \varepsilon^{-N}.
\end{equation}
(ii) A net of functions $(p_\varepsilon)$ is said to be $\log$-$L^\infty_1$-moderate if there exist $N\in \mathbb{N}_0$ and $C>0$ such that
$$\|p_\varepsilon\|_{L^\infty}\leq C |\log \varepsilon|^N, \qquad \|p'_\varepsilon\|_{L^\infty}\leq C \varepsilon^{-N}.$$
\end{defn}
\begin{rem}
We note that for the clarity of expression, we put two condition in \eqref{nu-mod} explicitly. However, we note that the first one follows from the second:
$$\left|\nu_\varepsilon(x)\right|=\left|\int\limits_0^x \nu'_\varepsilon(\xi)d\xi\right|\leq C\|\nu'_\varepsilon\|_{L^\infty(0,1)}.$$ 
The same remark applies to other conditions.
\end{rem}
\begin{rem} We note that such assumptions are natural for distributional coefficients in the sense that regularisations of distributions are moderate. Precisely, by the structure theorems for distributions (see, e.g. \cite{Garet}, \cite{Friedlander}), we know that distributions 
\begin{equation}\label{moder}
  \mathcal{D}'(0,1) \subset \{L^\infty(0,1) -\text{moderate families} \},  
\end{equation}
and we see from \eqref{moder}, that a solution to an initial/boundary problem may not exist in the sense of distributions, while it may exist in the set of $L^\infty$-moderate functions. 
\end{rem}

To give an example, at least for $1\leq p<\infty$, let us take $f\in L^2(0,1)$, $f:(0,1)\to \mathbb{C}$. We introduce the function 
$$\Tilde{f}=\left\{\begin{array}{l}
    f, \text{ on }(0,1),  \\
    0,  \text{ on }\mathbb{R} \setminus (0,1),
\end{array}\right.$$
then $\Tilde{f}:\mathbb{R}\to \mathbb{C}$, and $\Tilde{f}\in \mathcal{E}'(\mathbb{R}).$

Let $\Tilde{f}_\varepsilon=\Tilde{f}*\psi_\varepsilon$ be obtained as the convolution of $\Tilde{f}$ with a Friedrich mollifier $\psi_\varepsilon$, where 
$$\psi_\varepsilon(x)=\frac{1}{\varepsilon}\psi\left(\frac{x}{\varepsilon}\right),\quad \text{for}\,\, \psi\in C^\infty_0(\mathbb{R}),\, \int \psi=1. $$
Then the regularising net $(\Tilde{f}_\varepsilon)$ is $L^p$-moderate for any $p \in [1,\infty)$, and it approximates $f$ on $(0,1)$:
$$0\leftarrow \|\Tilde{f}_\varepsilon-\Tilde{f}\|^p_{L^p(\mathbb{R})}\approx \|\Tilde{f}_\varepsilon-f\|^p_{L^p(0,1)}+\|\Tilde{f}_\varepsilon\|^p_{L^p(\mathbb{R}\setminus (0,1))}.$$
Now, let us introduce the notion of a very weak solution to the initial/boundary problem \eqref{C.p1}-\eqref{C.p3}.

\begin{defn}\label{D2}
Let $p, \, \nu \in \mathcal{D}'(0,1)$.  The net $(u_\varepsilon)_{\varepsilon>0}$ is said to be a very weak solution to the initial/boundary problem  \eqref{C.p1}-\eqref{C.p3} if there exist a $\log$-$L^\infty_1$-moderate regularisation $p_\varepsilon$ of $p$, $L^\infty_1$-moderate regularisation $\nu_\varepsilon$ of $\nu$ with $q_\varepsilon=\nu'_\varepsilon$, $H^2$-moderate regularisation $u_{0,\varepsilon}$ of $u_0,$ and $L^2$-moderate regularisation $u_{1,\varepsilon}$ of $u_1$, such that
\begin{equation}\label{vw1}
         \left\{\begin{array}{l}\partial^2_t u_\varepsilon(t,x)-\partial^2_x u_\varepsilon(t,x)+p_\varepsilon(x)\partial_xu_\varepsilon(t,x)+q_\varepsilon(x) u_\varepsilon(t,x)=0,\,\, (t,x)\in [0,T]\times(0,1),\\
 u_\varepsilon(0,x)=u_{0,\varepsilon}(x),\,\,\, x\in (0,1), \\
\partial_t u_\varepsilon(0,x)=u_{1,\varepsilon}(x), \,\,\, x\in (0,1),\\
u_\varepsilon(t,0)=0=u_\varepsilon(t,1), \quad t\in[0,T],
\end{array}\right.\end{equation}
and $(u_\varepsilon)$, $(\partial_x u_\varepsilon)$ are $L^{2}$-moderate.
\end{defn}

Then we have the following properties of very weak solutions.

\begin{thm}[Existence]\label{Ext}
Let the coefficients $p$, $q$ and initial data $(u_0,\, u_1)$ be distributions in $(0,1)$. Then the initial/boundary problem  \eqref{C.p1}-\eqref{C.p3} has a very weak solution.
\end{thm}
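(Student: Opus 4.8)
The plan is to regularise all the distributional objects, invoke the classical well-posedness of the regularised problem from Corollary~\ref{cor1}, and then deduce $L^2$-moderateness of the solution net and of its $x$-derivative directly from the explicit bounds \eqref{ec1} and \eqref{ec3}.

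First I would fix a Friedrichs mollifier $\psi\in C^\infty_0(\mathbb{R})$ with $\int\psi=1$ and, using the structure theorems for distributions recalled around \eqref{moder} (extending each distribution suitably across the endpoints of $(0,1)$, as in the example preceding Definition~\ref{D2}), set $\nu_\varepsilon:=\nu*\psi_\varepsilon$, $u_{0,\varepsilon}:=u_0*\psi_\varepsilon$, $u_{1,\varepsilon}:=u_1*\psi_\varepsilon$ and $p_\varepsilon:=p*\psi_{\omega(\varepsilon)}$, where $\omega(\varepsilon)\to0$ is a slower scale, for instance $\omega(\varepsilon)=(\log\varepsilon^{-1})^{-1/m}$ with $m$ a local order of $p$. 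All these nets converge to the respective distributions in $\mathcal{D}'(0,1)$, and by the structure theorem $\nu_\varepsilon$ is $L^\infty_1$-moderate, $u_{0,\varepsilon}$ is $H^2$-moderate, and $u_{1,\varepsilon}$ is $L^2$-moderate, while the slower scaling turns the a priori bound $\|p*\psi_\delta\|_{L^\infty}\lesssim\delta^{-m}$ into $\|p_\varepsilon\|_{L^\infty}\lesssim\log\varepsilon^{-1}$, so that $p_\varepsilon$ is $\log$-$L^\infty_1$-moderate; we set $q_\varepsilon:=\nu'_\varepsilon$. Being smooth, these nets also satisfy polynomial-in-$\varepsilon^{-1}$ bounds for $\|p'_\varepsilon\|_{L^2}$, $\|u''_{0,\varepsilon}\|_{L^2}$, $\|u''_{1,\varepsilon}\|_{L^2}$, etc.

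For each fixed $\varepsilon>0$ the data $(u_{0,\varepsilon},u_{1,\varepsilon})$ and the coefficients $p_\varepsilon,\nu_\varepsilon,q_\varepsilon=\nu'_\varepsilon$ satisfy the hypotheses of Theorem~\ref{th1} and Corollary~\ref{cor1}, so the regularised problem \eqref{vw1} has a unique solution $u_\varepsilon\in C([0,T],L^2(0,1))$ obeying \eqref{ec1}--\eqref{ec4}. Inserting the moderateness bounds: in \eqref{ec1} one has $\exp\{2\|p_\varepsilon\|_{L^1}\}\le\exp\{2\|p_\varepsilon\|_{L^\infty}\}\le\exp\{C\log\varepsilon^{-1}\}=\varepsilon^{-C}$ and $\|u_{0,\varepsilon}\|_{L^2},\|u_{1,\varepsilon}\|_{L^2}\lesssim\varepsilon^{-N}$, whence $\sup_{t\in[0,T]}\|u_\varepsilon(t,\cdot)\|_{L^2}\le C\varepsilon^{-N'}$, i.e. $(u_\varepsilon)$ is $L^2$-moderate. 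Likewise every factor on the right-hand side of \eqref{ec3}---namely $\exp\{2\|p_\varepsilon\|_{L^1}\}$, $\|\nu_\varepsilon\|_{L^2}\le\|\nu_\varepsilon\|_{L^\infty}$, $\|p_\varepsilon\|_{L^2}\le\|p_\varepsilon\|_{L^\infty}$, $\|p'_\varepsilon\|_{L^1}\le\|p'_\varepsilon\|_{L^\infty}$, $\|\nu_\varepsilon\|_{L^\infty}$, $\|q_\varepsilon\|_{L^\infty}=\|\nu'_\varepsilon\|_{L^\infty}$, together with the data norms $\|u''_{0,\varepsilon}\|_{L^2},\|u'_{0,\varepsilon}\|_{L^2},\|u_{0,\varepsilon}\|_{L^2},\|u_{1,\varepsilon}\|_{L^2}$---is $\le C\varepsilon^{-N}$ for a suitable $N$, so the whole bound is $\le C\varepsilon^{-N''}$ and $(\partial_x u_\varepsilon)$ is $L^2$-moderate. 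By Definition~\ref{D2} the net $(u_\varepsilon)_{\varepsilon>0}$ is then a very weak solution of \eqref{C.p1}--\eqref{C.p3}.

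The only genuinely delicate point is the regularisation step: since $\|p_\varepsilon\|_{L^1}$, hence $\|p_\varepsilon\|_{L^\infty}$, enters the estimates through the factor $\exp\{2\|p_\varepsilon\|_{L^1}\}$, the coefficient $p$ must be mollified on the slower scale $\omega(\varepsilon)$ in order to keep this growth logarithmic rather than polynomial in $\varepsilon^{-1}$, and one still has to check that $p*\psi_{\omega(\varepsilon)}\to p$ in $\mathcal{D}'(0,1)$ and that the derivative bound $\|p'_\varepsilon\|_{L^\infty}\le C\varepsilon^{-N}$ is retained. Everything else amounts to a routine substitution of the moderateness bounds into \eqref{ec1} and \eqref{ec3}.
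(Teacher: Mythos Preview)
Your proof is correct and follows essentially the same strategy as the paper: regularise the data and coefficients to obtain nets with the moderateness properties of Definitions~\ref{D1}--\ref{D1-1}, solve the regularised problem \eqref{vw1} for each fixed $\varepsilon$ via Theorem~\ref{th1}/Corollary~\ref{cor1}, and then read off $L^2$-moderateness of $(u_\varepsilon)$ and $(\partial_x u_\varepsilon)$ from the estimates \eqref{ec1} and \eqref{ec3}.

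The one place where you go beyond the paper is the explicit construction of the $\log$-$L^\infty_1$-moderate regularisation of $p$ via the slower scale $\omega(\varepsilon)=(\log\varepsilon^{-1})^{-1/m}$. The paper simply asserts that such a regularisation exists and writes down the resulting bounds, whereas you actually exhibit one and verify that $\exp\{2\|p_\varepsilon\|_{L^1}\}\le\varepsilon^{-C}$ is polynomial in $\varepsilon^{-1}$; this is useful since, as you correctly note, the exponential dependence on $\|p_\varepsilon\|_{L^1}$ is what forces the logarithmic (rather than polynomial) growth requirement on $\|p_\varepsilon\|_{L^\infty}$. Note also that your choice automatically gives $\|p'_\varepsilon\|_{L^\infty}\lesssim\omega(\varepsilon)^{-(m+1)}=(\log\varepsilon^{-1})^{(m+1)/m}$, so the second condition in Definition~\ref{D1-1}(ii) holds with room to spare.
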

\begin{proof}
Since the formulation of \eqref{C.p1}-\eqref{C.p3} in this case might be impossible in the distributional sense due to issues related to the product of distributions, we replace \eqref{C.p1}-\eqref{C.p3} with a regularised equation. In other words, we regularise $p$, $p'$, $\nu$, $q$, $u_0$, $u_1$, $u'_0$ and $u''_0$ by some corresponding sets $p_\varepsilon$, $p'_\varepsilon$, $\nu_\varepsilon$, $q_\varepsilon$, $u_{0,\varepsilon}$, $u_{1,\varepsilon}$, $u'_{0,\varepsilon}$ and $u''_{0,\varepsilon}$ of smooth functions from $L^ \infty(0,1)$ and $L^2(0,1)$, respectively.

Hence, $p_\varepsilon$ is $\log$-$L^\infty_1$-moderate regularisation of the coefficient $p$, and $\nu_\varepsilon$ with $q_\varepsilon=\nu'_\varepsilon$ is $L^\infty_1$-moderate regularisation of $\nu$, $u_{0,\varepsilon}$ is $H^2$-moderate regularisation of $u_0$ and $u_{1,\varepsilon}$ is $L^2$-moderate regularisation of $u_1$. So by Definition \ref{D1} there exist $N\in \mathbb{N}_0$ and $C_1>0$, $C_2>0$, $C_3>0$, $C_4>0$, $C_5>0$, $C_6>0$, $C_7>0$, $C_8>0$ such that
$$\|p_\varepsilon\|_{L^\infty}\leq C_1|\log{\varepsilon}|^N,\quad \|p'_\varepsilon\|_{L^\infty}\leq C_2\varepsilon^{-N},\quad \|\nu_\varepsilon\|_{L^\infty}\leq C_3\varepsilon^{-N}\quad \|q_\varepsilon\|_{L^\infty}\leq C_4\varepsilon^{-N},$$
$$\|u_{0,\varepsilon}\|_{L^2}\leq C_5\varepsilon^{-N}, \quad \|u_{1,\varepsilon}\|_{L^2}\leq C_6\varepsilon^{-N}, \quad \|u'_{0,\varepsilon}\|_{L^2}\leq C_7\varepsilon^{-N}, \quad \|u''_{0,\varepsilon}\|_{L^2}\leq C_8\varepsilon^{-N}.$$

Now we fix $\varepsilon\in (0,1]$, and consider the regularised problem \eqref{vw1}. Then all discussions and calculations of Theorem \ref{th1} are valid. Thus, by Theorem \ref{th1}, the equation \eqref{vw1} has unique solution $u_\varepsilon(t,x)$ in the space $C^0([0,T]; H^1(0,1))\cap C^1([0,T];L^2(0,1))$.

By Corollary \ref{cor1} there exist $N\in \mathbb{N}_0$ and $C>0$, such that
$$\|u_\varepsilon(t,\cdot)\|_{L^2}\lesssim \exp{\{2\|p_\varepsilon\|_{L^2}\}}\left(\|u_{0,\varepsilon}\|_{L^2}+\|u_{1,\varepsilon}\|_{L^2}\right)\leq C\varepsilon^{-N},$$
\begin{eqnarray*}
\|\partial_x u_\varepsilon(t,\cdot)\|^2_{L^2}
&\lesssim& \exp{\left\{2\|p_\varepsilon\|_{L^1}\right\}}\left\{\left(1+\|\nu_\varepsilon\|^2_{L^2}\left(\|\nu_\varepsilon\|^2_{L^2}+\|p_\varepsilon\|^2_{L^2}+\|p'_\varepsilon\|^2_{L^1}\right)\right)\times\right.\nonumber\\
&\times& \left(\|u''_{0,\varepsilon}\|^2_{L^2}+\|p_\varepsilon\|^2_{L^\infty}\|u'_{0,\varepsilon}\|^2_{L^2}+\left(\|p^2_\varepsilon\|^2_{L^\infty}+\|p'_\varepsilon\|^2_{L^\infty}+\|q_\varepsilon\|^2_{L^\infty}\right)\|u_{0,\varepsilon}\|^2_{L^2}\right.\nonumber\\
&+&\left.\|u_{1,\varepsilon}\|^2_{L^2}\right)+\left.\left(\|p_\varepsilon\|^2_{L^\infty}+\|\nu_\varepsilon\|^2_{L^\infty}\right)\left(\|u_{0,\varepsilon}\|^2_{L^2}+\|u_{1,\varepsilon}\|^2_{L^2}\right)\right\}\leq C\varepsilon^{-N},
\end{eqnarray*}
where the constants in these inequalities are independent of $p$, $p'$, $\nu$, $q$, $u_0$, $u_1$, $u'_0$ and $u''_0$.
Hence, $(u_\varepsilon)$ is $L^2$-moderate, and the proof of Theorem \ref{Ext} is complete.
\end{proof}

\begin{rem}
By
\begin{eqnarray*}
\|\partial_t u_\varepsilon(t,\cdot)\|^2_{L^2}&\lesssim& \exp{\{2\|p_\varepsilon\|_{L^1}\}}\left( \|u''_{0,\varepsilon}\|^2_{L^2}+\|p_\varepsilon\|^2_{L^\infty}\|u'_{0,\varepsilon}\|^2_{L^2}\right.\nonumber\\
&+&\left.\left(\|p^2_\varepsilon\|^2_{L^\infty}+\|p'_\varepsilon\|^2_{L^\infty}+\|q_\varepsilon\|^2_{L^\infty}\right)\|u_{0,\varepsilon}\|^2_{L^2}+\|u_{1,\varepsilon}\|^2_{L^2}\right)\leq C\varepsilon^{-N},
\end{eqnarray*}
we note that the net $\partial_tu_\varepsilon$ is also $L^2$-moderate.
\end{rem}

Describing the uniqueness of the very weak solutions amounts to “measuring” the changes on involved associated nets: negligibility conditions for nets of functions/distributions read as follows:

\begin{defn}[Negligibility]\label{D3}
(i) Let $(u_\varepsilon)$, $(\Tilde{u}_\varepsilon)$ be two nets in $L^2(0,1)$. Then, the net $(u_\varepsilon-\Tilde{u}_\varepsilon)$ is called $L^2$-negligible, if for every $N\in \mathbb{N}$, there exists $C>0$ such that the following condition is satisfied
$$\|u_\varepsilon-\Tilde{u}_\varepsilon\|_{L^2}\leq C \varepsilon^N,$$
for all $\varepsilon\in (0,1]$. In the case where $u_\varepsilon=u_\varepsilon(t,x)$ is a net depending on $t\in [0,T]$, then the negligibility condition can be introduced as 
$$\|u_\varepsilon(t,\cdot)-\Tilde{u}_\varepsilon(t,\cdot)\|_{L^2}\leq C \varepsilon^N,$$
uniformly in $t\in [0,T]$. The constant $C$ can depend on $N$ but not on $\varepsilon$.

(ii) Let $(p_\varepsilon)$, $(\Tilde{p}_\varepsilon)$ be two nets in $L^\infty(0,1)$. Then, the net $(p_\varepsilon-\Tilde{p}_\varepsilon)$ is called $L^\infty$-negligible, if for every $N\in \mathbb{N}$, there exists $C>0$ such that the following condition is satisfied
$$\|p_\varepsilon-\Tilde{p}_\varepsilon\|_{L^2}\leq C \varepsilon^N,$$
for all $\varepsilon\in (0,1]$. 
\end{defn}

Let us state the ``$\varepsilon$-parameterised problems" to be considered:
\begin{equation}\label{un1}
    \left\{\begin{array}{l}\partial^2_t u_\varepsilon(t,x)-\partial^2_x u_\varepsilon(t,x)+p_\varepsilon(x)\partial_xu_\varepsilon(t,x)+ q_\varepsilon(x) u_\varepsilon(t,x)=0,\,\,\, (t,x)\in [0,T]\times(0,1),\\
 u_\varepsilon(0,x)=u_{0,\varepsilon}(x),\,\,\, x\in (0,1), \\
\partial_t u_\varepsilon(0,x)=u_{1,\varepsilon}(x), \,\, x\in (0,1),\\
u_\varepsilon(t,0)=0=u_\varepsilon(t,1),\,\, t\in[0,T],
\end{array}\right.
\end{equation}
and
\begin{equation}\label{un2}
    \left\{\begin{array}{l}\partial^2_t \Tilde{u}_\varepsilon(t,x)-\partial^2_x \Tilde{u}_\varepsilon(t,x)+\Tilde{p}_\varepsilon(x)\partial_x\Tilde{u}_\varepsilon(t,x)+\Tilde{q}_\varepsilon(x) \Tilde{u}_\varepsilon(t,x)=0,\quad (t,x)\in [0,T]\times(0,1),\\
 \Tilde{u}_\varepsilon(0,x)=\Tilde{u}_{0,\varepsilon}(x),\,\,\, x\in (0,1), \\
\partial_t \Tilde{u}_\varepsilon(0,x)=\Tilde{u}_{1,\varepsilon}(x), \,\,\, x\in (0,1),\\
\Tilde{u}_\varepsilon(t,0)=0=\Tilde{u}_\varepsilon(t,1), \quad t\in[0,T].
\end{array}\right.
\end{equation}

\begin{defn}[Uniqueness of the very weak solution]\label{D4}
We say that initial/boundary problem \eqref{C.p1}-\eqref{C.p3} has a unique very weak solution, if
for all $\log$-$L^\infty_1$-moderate nets $p_\varepsilon$, $\Tilde{p}_\varepsilon$, such that $(p_\varepsilon-\Tilde{p}_\varepsilon)$ is $L^\infty$-negligible; $L^\infty_1$-moderate nets $\nu_\varepsilon$, $\Tilde{\nu}_\varepsilon$ with $q_\varepsilon=\nu'_\varepsilon$, $\Tilde{q}_\varepsilon=\Tilde{\nu}'_\varepsilon$ such that $(q_\varepsilon-\Tilde{q}_\varepsilon)$ is $L^\infty$-negligible; for all $H^2$-moderate regularisations $u_{0,\varepsilon},\,\Tilde{u}_{0,\varepsilon}$, such that $(u_{0,\varepsilon}-\Tilde{u}_{0,\varepsilon})$ are $L^2$-negligible and for all $L^2$-moderate regularisations $u_{1,\varepsilon},\,\Tilde{u}_{1,\varepsilon}$, such that $(u_{1,\varepsilon}-\Tilde{u}_{1,\varepsilon})$  are $L^2$-negligible, we have that $u_\varepsilon-\Tilde{u}_\varepsilon$ is $L^2$-negligible.
\end{defn}

\begin{thm}[Uniqueness of the very weak solution]\label{Th-U}
Let the coefficients $p$, $q=\nu'$ and initial data $(u_0,\, u_1)$ be distributions in $(0,1)$. Then the very weak solution to the initial/boundary problem  \eqref{C.p1}-\eqref{C.p3} is unique.
\end{thm}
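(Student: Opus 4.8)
The plan is to reduce the uniqueness assertion to the nonhomogeneous energy estimate of Theorem~\ref{non-hom}, applied to the difference of the two parameterised solutions. Let $u_\varepsilon$ solve \eqref{un1} and $\Tilde u_\varepsilon$ solve \eqref{un2}, and set $W_\varepsilon:=u_\varepsilon-\Tilde u_\varepsilon$. Subtracting \eqref{un2} from \eqref{un1} and regrouping so that the operator with the coefficients $p_\varepsilon,q_\varepsilon$ acts on $W_\varepsilon$ (writing $p_\varepsilon\partial_x u_\varepsilon-\Tilde p_\varepsilon\partial_x\Tilde u_\varepsilon=p_\varepsilon\partial_x W_\varepsilon+(p_\varepsilon-\Tilde p_\varepsilon)\partial_x\Tilde u_\varepsilon$, and similarly for the zero order term), one sees that $W_\varepsilon$ solves the nonhomogeneous problem
\begin{equation*}
\left\{
\begin{array}{l}
\partial_t^2 W_\varepsilon-\partial_x^2 W_\varepsilon+p_\varepsilon(x)\,\partial_x W_\varepsilon+q_\varepsilon(x) W_\varepsilon=f_\varepsilon(t,x),\quad (t,x)\in[0,T]\times(0,1),\\
W_\varepsilon(0,x)=u_{0,\varepsilon}(x)-\Tilde u_{0,\varepsilon}(x),\qquad \partial_t W_\varepsilon(0,x)=u_{1,\varepsilon}(x)-\Tilde u_{1,\varepsilon}(x),\\
W_\varepsilon(t,0)=0=W_\varepsilon(t,1),\quad t\in[0,T],
\end{array}
\right.
\end{equation*}
with source term
$$f_\varepsilon(t,x)=\bigl(\Tilde p_\varepsilon(x)-p_\varepsilon(x)\bigr)\,\partial_x\Tilde u_\varepsilon(t,x)+\bigl(\Tilde q_\varepsilon(x)-q_\varepsilon(x)\bigr)\,\Tilde u_\varepsilon(t,x).$$

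The first step I would carry out is to show that $f_\varepsilon$ is $L^2$-negligible in $C([0,T],L^2(0,1))$. By the triangle and H\"older inequalities,
$$\|f_\varepsilon(t,\cdot)\|_{L^2}\le \|\Tilde p_\varepsilon-p_\varepsilon\|_{L^\infty}\,\|\partial_x\Tilde u_\varepsilon(t,\cdot)\|_{L^2}+\|\Tilde q_\varepsilon-q_\varepsilon\|_{L^\infty}\,\|\Tilde u_\varepsilon(t,\cdot)\|_{L^2}.$$
Here $\|\Tilde p_\varepsilon-p_\varepsilon\|_{L^\infty}$ and $\|\Tilde q_\varepsilon-q_\varepsilon\|_{L^\infty}$ are negligible by the hypotheses of Definition~\ref{D4}, while $\|\Tilde u_\varepsilon(t,\cdot)\|_{L^2}$ and $\|\partial_x\Tilde u_\varepsilon(t,\cdot)\|_{L^2}$ are $L^2$-moderate uniformly in $t\in[0,T]$ --- this is exactly the content of the bounds established in the proof of Theorem~\ref{Ext} (and the subsequent Remark), applied to the net $\Tilde u_\varepsilon$ generated by the $\log$-$L^\infty_1$-, $L^\infty_1$-, $H^2$- and $L^2$-moderate data $\Tilde p_\varepsilon,\Tilde\nu_\varepsilon,\Tilde u_{0,\varepsilon},\Tilde u_{1,\varepsilon}$. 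Since the product of a negligible net with a net of moderate growth is negligible, this yields: for every $N\in\mathbb{N}$ there is $C_N>0$ with $\|f_\varepsilon\|_{C([0,T],L^2(0,1))}\le C_N\varepsilon^N$.

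Next I would feed this into the stability estimate. Since $W_\varepsilon$ solves a problem of the form \eqref{nonh} with coefficients $p_\varepsilon$, $q_\varepsilon=\nu_\varepsilon'$ meeting the hypotheses of Theorem~\ref{non-hom}, estimate \eqref{es-nh1} gives, uniformly in $t\in[0,T]$,
$$\|W_\varepsilon(t,\cdot)\|_{L^2}^2\lesssim \exp\{\|p_\varepsilon\|_{L^1}\}\Bigl(\|g_\varepsilon(u_{0,\varepsilon}-\Tilde u_{0,\varepsilon})\|_{L^2}^2+\|g_\varepsilon(u_{1,\varepsilon}-\Tilde u_{1,\varepsilon})\|_{W^{-1}_{\mathcal{L}_\varepsilon}}^2+2T^2\|g_\varepsilon\|_{L^\infty}^2\|f_\varepsilon\|_{C([0,T],L^2(0,1))}^2\Bigr),$$
where $g_\varepsilon(x)=\exp\{-\tfrac12\int_0^x p_\varepsilon(\xi)\,d\xi\}$. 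Using $\|g_\varepsilon\|_{L^\infty}\le\exp\{\tfrac12\|p_\varepsilon\|_{L^1}\}$ as in \eqref{g-exp}, the inequality $\lambda_n\ge1$ to pass from the $W^{-1}_{\mathcal{L}_\varepsilon}$-norm to the $L^2$-norm, and $\|p_\varepsilon\|_{L^1}\le\|p_\varepsilon\|_{L^\infty}$, the $\log$-moderateness of $p_\varepsilon$ shows that the prefactor has moderate growth in $\varepsilon^{-1}$, so the whole right-hand side is a net of moderate growth times
$$\|u_{0,\varepsilon}-\Tilde u_{0,\varepsilon}\|_{L^2}^2+\|u_{1,\varepsilon}-\Tilde u_{1,\varepsilon}\|_{L^2}^2+2T^2\|f_\varepsilon\|_{C([0,T],L^2(0,1))}^2,$$
which is $L^2$-negligible (the first two terms by the negligibility hypotheses on the data in Definition~\ref{D4}, the third by the first step). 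As a negligible quantity times a net of moderate growth is again negligible, one obtains $\|W_\varepsilon(t,\cdot)\|_{L^2}\le C_N\varepsilon^N$ uniformly in $t\in[0,T]$, for every $N$; that is, $u_\varepsilon-\Tilde u_\varepsilon$ is $L^2$-negligible, as required.

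I expect the main obstacle to be the first step: one must know not merely that $u_\varepsilon,\Tilde u_\varepsilon$ are $L^2$-moderate, but that their spatial derivatives $\partial_x u_\varepsilon,\partial_x\Tilde u_\varepsilon$ are $L^2$-moderate too, since $\partial_x\Tilde u_\varepsilon$ multiplies the merely negligible (not identically zero) difference $\Tilde p_\varepsilon-p_\varepsilon$ in $f_\varepsilon$; this is precisely why the $L^2$-moderate bound for $\|\partial_x u_\varepsilon\|_{L^2}$ recorded in the proof of Theorem~\ref{Ext} is indispensable here and cannot be circumvented. The remainder is bookkeeping: the implicit constants and the exponent governing $\exp\{\|p_\varepsilon\|_{L^1}\}$ are fixed once the regularisations are fixed, so the arbitrarily high order of negligibility of $f_\varepsilon$ and of the data differences dominates the fixed moderate growth of the prefactor, uniformly in $t\in[0,T]$.
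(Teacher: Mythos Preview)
Your proposal is correct and follows essentially the same route as the paper's proof: form the difference $W_\varepsilon=u_\varepsilon-\Tilde u_\varepsilon$, observe it satisfies the nonhomogeneous problem with source $f_\varepsilon=(\Tilde p_\varepsilon-p_\varepsilon)\partial_x\Tilde u_\varepsilon+(\Tilde q_\varepsilon-q_\varepsilon)\Tilde u_\varepsilon$, apply the $L^2$-estimate for the nonhomogeneous problem, and conclude by combining the negligibility of the coefficient and data differences with the moderateness of $\Tilde u_\varepsilon$ and $\partial_x\Tilde u_\varepsilon$. The only cosmetic difference is that the paper invokes the already-simplified estimate \eqref{ec-nh1} from Corollary~\ref{cor2}, whereas you cite \eqref{es-nh1} from Theorem~\ref{non-hom} and then carry out by hand the reduction (via $\|g_\varepsilon\|_{L^\infty}\le\exp\{\tfrac12\|p_\varepsilon\|_{L^1}\}$ and $\lambda_n\ge1$) that the corollary encapsulates.
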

\begin{proof}
We denote by $u_\varepsilon$ and $\Tilde{u}_\varepsilon$ the families of solutions to the initial/boundary problems \eqref{un1} and \eqref{un2} respectively. Setting $U_\varepsilon$ to be the difference of these nets $U_\varepsilon:=u_\varepsilon(t,\cdot)-\Tilde{u}_\varepsilon(t,\cdot)$, then $U_\varepsilon$ solves
    \begin{equation}\label{unq}
    \left\{\begin{array}{l}\partial^2_t U_\varepsilon(t,x)-\partial^2_x U_\varepsilon(t,x)+p_\varepsilon(x)\partial_xU_\varepsilon(t,x)+q_\varepsilon(x) U_\varepsilon(t,x)=f_\varepsilon(t,x),\\
    \qquad\qquad\qquad\qquad\qquad\qquad\qquad\qquad\qquad\qquad\qquad (t,x)\in [0,T]\times(0,1),\\
 U_\varepsilon(0,x)=(u_{0,\varepsilon}-\Tilde{u}_{0,\varepsilon})(x),\,\,\, x\in (0,1), \\
\partial_t U_\varepsilon(0,x)=(u_{1,\varepsilon}-\Tilde{u}_{1,\varepsilon})(x), \,\,\, x\in (0,1),\\
U_\varepsilon(t,0)=0=U_\varepsilon(t,1),
\end{array}\right.
\end{equation}
where we set $f_\varepsilon(t,x):=(\Tilde{p}_\varepsilon(x)-p_\varepsilon(x))\partial_x\Tilde{u}_\varepsilon(t,x)+(\Tilde{q}_\varepsilon(x)-q_\varepsilon(x))\Tilde{u}_\varepsilon(t,x)$ for the forcing term to the non-homogeneous initial/boundary problem \eqref{unq}.

Passing to the $L^2$-norm of the $U_\varepsilon$, by using \eqref{ec-nh1} we obtain 
\begin{eqnarray*}
\|U_\varepsilon(t,\cdot)\|^2_{L^2}&\lesssim& \exp{\{2\|p_\varepsilon\|_{L^2}\}}\left(\|U_\varepsilon(0,\cdot)\|^2_{L^2}+\|\partial_tU_\varepsilon(0,\cdot)\|^2_{L^2}+2T^2\|f_\varepsilon\|^2_{C([0,T],L^2(0,1))}\right).
\end{eqnarray*}
Since
$$\|f_\varepsilon\|^2_{C([0,T],L^2(0,1))}\leq\|p_\varepsilon-\Tilde{p}_\varepsilon\|^2_{L^\infty}\|\partial_x\Tilde{u}_\varepsilon\|^2_{C([0,T],L^2(0,1))}+ \|q_\varepsilon-\Tilde{q}_\varepsilon\|^2_{L^\infty}\|\Tilde{u}_\varepsilon\|^2_{C([0,T],L^2(0,1))}$$
and using the initial data of \eqref{unq}, we get
\begin{eqnarray*}
\|U_\varepsilon(t,\cdot)\|^2_{L^2}&\lesssim&C\varepsilon^{-N_0}\Big( \|u_{0,\varepsilon}-\Tilde{u}_{0,\varepsilon}\|^2_{L^2}+\|u_{1,\varepsilon}-\Tilde{u}_{1,\varepsilon}\|^2_{L^2}\\
&+&\left.2T^2\|p_\varepsilon-\Tilde{p}_\varepsilon\|^2_{L^\infty}\|\partial_x\Tilde{u}_\varepsilon\|^2_{C([0,T],L^2(0,1))}+2T^2\|q_\varepsilon-\Tilde{q}_\varepsilon\|^2_{L^\infty}\|\Tilde{u}_\varepsilon\|^2_{C([0,T],L^2(0,1))}\right),
\end{eqnarray*}
for some $N_0>0$.
Taking into account the negligibility of the nets $u_{0,\varepsilon}-\Tilde{u}_{0,\varepsilon}$, $u_{1,\varepsilon}-\Tilde{u}_{1,\varepsilon}$, $p_\varepsilon-\Tilde{p}_\varepsilon$ and $q_\varepsilon-\Tilde{q}_\varepsilon$ we get
$$\|U_\varepsilon(t,\cdot)\|^2_{L^2}\leq C_1\varepsilon^{-N_0}\left(C_2\varepsilon^{N_1}+C_3\varepsilon^{N_2}+C_4\varepsilon^{N_3}\varepsilon^{-N_4}+C_5\varepsilon^{N_5}\varepsilon^{-N_6}\right)$$
for some $C_1>0,\,C_2>0,\,C_3>0,\,C_4>0,\,C_5>0,\,N_0,\, N_4,\,N_6\in \mathbb{N}$ and all $N_1,\,N_2,\,N_3,\,N_5\in \mathbb{N}$, since $\Tilde{u}_\varepsilon$ is moderate. Then, for all $M\in \mathbb{N}$ we have
$$\|U_\varepsilon(t,\cdot)\|^2_{L^2}\leq C_M \varepsilon^M.$$
The last estimate holds true uniformly in $t$ , and this completes the proof of Theorem \ref{Th-U}.
\end{proof}

\begin{thm}[Consistency]\label{Th-C} Assume that $p'\in L^2(0,1)$, $q=\nu'$, $\nu \in L^\infty(0,1)$, and let $p_\varepsilon$ be any $\log$-$L^\infty_1$-moderate regularisation of $p$, $\nu_\varepsilon$ be any $L^\infty_1$-moderate regularisation of $\nu$ with $q_\varepsilon=\nu'_\varepsilon$. Let the initial data satisfy $(u_0,\, u_1) \in L^2(0,1)\times L^2(0,1)$. Let $u$ be a very weak solution of the initial/boundary problem \eqref{C.p1}-\eqref{C.p3}. Then for any families $p_\varepsilon$, $q_\varepsilon$, $u_{0,\varepsilon}$, $u_{1,\varepsilon}$ such that $\|u_{0}-u_{0,\varepsilon}\|_{L^2}\to 0$, $\|u_{1}-u_{1,\varepsilon}\|_{L^2}\to 0$, $\|p-p_{\varepsilon}\|_{L^\infty}\to 0$  $\|q-q_{\varepsilon}\|_{L^\infty}\to 0$ as $\varepsilon\to 0$, any representative $(u_\varepsilon)$ of $u$ converges as $$\sup\limits_{0\leq t\leq T}\|u(t,\cdot)-u_\varepsilon(t,\cdot)\|_{L^2(0,1)}\to 0$$ for $\varepsilon\to 0$ to the unique classical solution in $C([0,T];L^2(0,1))$ of the initial/boundary problem \eqref{C.p1}-\eqref{C.p3} given by Theorem \ref{th1}.
\end{thm}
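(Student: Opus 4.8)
The plan is to realise $u-u_\varepsilon$ as the solution of a non-homogeneous wave equation whose initial data and forcing term vanish in the limit, and then to invoke the energy estimate of Theorem~\ref{non-hom}. First I would fix a representative $(u_\varepsilon)$ of the very weak solution: by Definition~\ref{D2} it solves \eqref{vw1} with some $\log$-$L^\infty_1$-moderate $p_\varepsilon$, an $L^\infty_1$-moderate $\nu_\varepsilon$ with $q_\varepsilon=\nu'_\varepsilon$, and $H^2$- and $L^2$-moderate regularisations $u_{0,\varepsilon}$, $u_{1,\varepsilon}$, chosen so that, by hypothesis, $p_\varepsilon\to p$ in $L^\infty$, $q_\varepsilon\to q$ in $L^\infty$, $u_{0,\varepsilon}\to u_0$ and $u_{1,\varepsilon}\to u_1$ in $L^2$. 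Let $u$ be the classical solution of \eqref{C.p1}--\eqref{C.p3} given by Theorem~\ref{th1}, put $\mathcal{L}_\varepsilon w:=-\partial_x^2 w+p_\varepsilon\partial_x w+q_\varepsilon w$ and $W_\varepsilon:=u-u_\varepsilon$. Subtracting $\partial_t^2 u_\varepsilon+\mathcal{L}_\varepsilon u_\varepsilon=0$ from $\partial_t^2 u+\mathcal{L}_\varepsilon u=(\mathcal{L}_\varepsilon-\mathcal{L})u$ gives
\[
\partial_t^2 W_\varepsilon+\mathcal{L}_\varepsilon W_\varepsilon=(p_\varepsilon-p)\,\partial_x u+(q_\varepsilon-q)\,u=:f_\varepsilon,
\]
together with $W_\varepsilon(0,\cdot)=u_0-u_{0,\varepsilon}$, $\partial_t W_\varepsilon(0,\cdot)=u_1-u_{1,\varepsilon}$ and Dirichlet boundary conditions.

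Since $p_\varepsilon$ is smooth, $p_\varepsilon'\in L^2(0,1)$ and $\nu_\varepsilon\in L^\infty(0,1)$, so Theorem~\ref{non-hom} applies to $W_\varepsilon$ with the operator $\mathcal{L}_\varepsilon$ and the weight $g_\varepsilon=\exp\{-\tfrac12\int_0^x p_\varepsilon\}$; its $L^2$-estimate \eqref{es-nh1} reads
\[
\|W_\varepsilon(t,\cdot)\|_{L^2}^2\lesssim \exp\{\|p_\varepsilon\|_{L^1}\}\Big(\|g_\varepsilon(u_0-u_{0,\varepsilon})\|_{L^2}^2+\|g_\varepsilon(u_1-u_{1,\varepsilon})\|_{W^{-1}_{\mathcal{L}_\varepsilon}}^2+2T^2\|g_\varepsilon\|_{L^\infty}^2\|f_\varepsilon\|_{C([0,T],L^2)}^2\Big),
\]
uniformly in $t\in[0,T]$. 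I would then check that the prefactors stay bounded and that each of the three summands tends to $0$. Because $p'\in L^2(0,1)$ forces $p\in L^\infty(0,1)$ and $\|p-p_\varepsilon\|_{L^\infty}\to0$, the norm $\|p_\varepsilon\|_{L^1}\le\|p_\varepsilon\|_{L^\infty}$ is bounded uniformly in $\varepsilon$, hence so are $\exp\{\|p_\varepsilon\|_{L^1}\}$ and $\|g_\varepsilon\|_{L^\infty}^2\le\exp\{\|p_\varepsilon\|_{L^1}\}$. For the data terms, $\|g_\varepsilon(u_0-u_{0,\varepsilon})\|_{L^2}\le\|g_\varepsilon\|_{L^\infty}\|u_0-u_{0,\varepsilon}\|_{L^2}\to0$, and since $\lambda_n\ge1$ yields $\|\cdot\|_{W^{-1}_{\mathcal{L}_\varepsilon}}\le\|\cdot\|_{L^2}$, also $\|g_\varepsilon(u_1-u_{1,\varepsilon})\|_{W^{-1}_{\mathcal{L}_\varepsilon}}\le\|g_\varepsilon\|_{L^\infty}\|u_1-u_{1,\varepsilon}\|_{L^2}\to0$. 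For the source term,
\[
\|f_\varepsilon\|_{C([0,T],L^2)}\le\|p-p_\varepsilon\|_{L^\infty}\,\|\partial_x u\|_{C([0,T],L^2)}+\|q-q_\varepsilon\|_{L^\infty}\,\|u\|_{C([0,T],L^2)}\to0,
\]
because $\|u\|_{C([0,T],L^2)}$ and $\|\partial_x u\|_{C([0,T],L^2)}$ are fixed finite quantities, controlled via Theorem~\ref{th1} and the estimates \eqref{est1}, \eqref{est3} by the exact (regular) coefficients and data only. Combining these, $\sup_{0\le t\le T}\|u(t,\cdot)-u_\varepsilon(t,\cdot)\|_{L^2}\to0$, and the identification of the limit with the unique $C([0,T];L^2(0,1))$-solution of \eqref{C.p1}--\eqref{C.p3} is exactly the uniqueness part of Theorem~\ref{th1}.

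The main obstacle I expect is the handling of $f_\varepsilon$. We know only that $\|p-p_\varepsilon\|_{L^\infty}\to0$ and $\|q-q_\varepsilon\|_{L^\infty}\to0$ \emph{without any rate}, so it is essential that $f_\varepsilon$ be written using the \emph{exact} solution $u$ rather than $u_\varepsilon$: a bound on $\|\partial_x u_\varepsilon\|_{C([0,T],L^2)}$ coming from Corollary~\ref{cor1} grows like $\varepsilon^{-N}$ and would destroy the argument, whereas $\|\partial_x u\|_{C([0,T],L^2)}$ is one fixed constant. This forces one to make sense of $\partial_t^2 u+\mathcal{L}_\varepsilon u=f_\varepsilon$, i.e.\ to know that the classical solution of the exact problem is regular enough that $\partial_x u$ is an $L^2$-valued function of $t$ (and $f_\varepsilon$ has the time-regularity needed to quote Theorem~\ref{non-hom}), which is precisely where the estimate \eqref{est3} and the regularity hypotheses $p'\in L^2$, $\nu\in L^\infty$ on the exact coefficients are used. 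A minor additional subtlety is that $u$ and $u_\varepsilon$ are expanded in the eigenbases of the \emph{different} operators $\mathcal{L}$ and $\mathcal{L}_\varepsilon$, so the subtraction producing the equation for $W_\varepsilon$ must be carried out at the level of the PDE, not term by term in a single fixed Fourier series.
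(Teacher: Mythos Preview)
Your proposal is correct and follows essentially the same route as the paper: set $W_\varepsilon=u-u_\varepsilon$, observe it solves the non-homogeneous problem for $\mathcal{L}_\varepsilon$ with source $f_\varepsilon=(p_\varepsilon-p)\partial_x u+(q_\varepsilon-q)u$, and apply the $L^2$-estimate from the non-homogeneous theory together with the convergence hypotheses. Your treatment is in fact more careful than the paper's in a couple of places---you explicitly argue the uniform boundedness of the prefactor $\exp\{\|p_\varepsilon\|_{L^1}\}$ via $\|p-p_\varepsilon\|_{L^\infty}\to0$, and you spell out why $f_\varepsilon$ must be written with the exact solution $u$ rather than $u_\varepsilon$---but the underlying strategy is identical.
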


\begin{proof}
For $u$ and for $u_\varepsilon$, as in our assumption, we introduce an auxiliary notation $V_\varepsilon(t, x):= u(t,x)-u_\varepsilon(t,x)$. Then the net $V_\varepsilon$ is a solution to the initial/boundary problem
\begin{equation}\label{51}
    \left\{\begin{array}{l}
        \partial^2_tV_\varepsilon(t,x)-\partial^2_xV_\varepsilon(t,x)+p_\varepsilon(x)\partial_x V_\varepsilon(t,x)+q_\varepsilon(x)V_\varepsilon(t,x)=f_\varepsilon(t,x),\\
        V_\varepsilon(0,x)=(u_0-u_{0,\varepsilon})(x),\quad x\in (0,1),\\
        \partial_tV_\varepsilon(0,x)=(u_1-u_{1,\varepsilon})(x),\quad x\in (0,1),\\
        V_\varepsilon(t,0)=0=V_\varepsilon(t,1), \quad t\in[0,T],
    \end{array}\right.
\end{equation}
where $f_\varepsilon(t,x)=(p_\varepsilon(x)-p(x))\partial_xu(t,x)+(q_\varepsilon(x)-q(x))u(t,x)$. Analogously to Theorem \ref{Th-U} we have that
\begin{eqnarray*}
\|V_\varepsilon(t,\cdot)\|^2_{L^2}&\lesssim& C\|p_\varepsilon\|_{L^\infty}\Big( \|u_{0}-{u}_{0,\varepsilon}\|^2_{L^2}+\|u_{1}-{u}_{1,\varepsilon}\|^2_{L^2}\\
&+&\left.2T^2\|p_\varepsilon-p\|^2_{L^\infty}\|\partial_xu\|^2_{C([0,T],L^2(0,1))}+2T^2\|q_\varepsilon-q\|^2_{L^\infty}\|u\|^2_{C([0,T],L^2(0,1))}\right),
\end{eqnarray*}
Since
$$\|u_{0}-{u}_{0,\varepsilon}\|_{L^2}\to 0,\quad \|u_{1}-{u}_{1,\varepsilon}\|_{L^2}\to 0,\quad \|p_\varepsilon-p\|_{L^\infty}\to 0,\quad \|q_\varepsilon-q\|_{L^\infty}\to 0$$
for $\varepsilon\to 0$ and $u$ is a very weak solution of the initial/boundary problem \eqref{C.p1}-\eqref{C.p3} we get
$$\|V_\varepsilon(t,\cdot)\|_{L^2}\to 0$$
for $\varepsilon\to 0$. This proves Theorem \ref{Th-C}.

\end{proof}

\end{document}